\documentclass[graybox]{svmult}

\usepackage{type1cm}         \usepackage{makeidx}         \usepackage{graphicx}        \usepackage{multicol}        \usepackage[bottom]{footmisc}

\usepackage{amsfonts, amsmath, amssymb}
\usepackage{xspace}          \usepackage{ytableau}        \usepackage{pgf}
\usepackage{subcaption}
\usepackage{multirow}

\usepackage[draft=true]{minted} \setminted{breaklines=true}
\definecolor{lbcolor}{rgb}{0.9,0.9,0.9}
\setminted{bgcolor=lbcolor}
\setminted{fontsize=\footnotesize}

\usepackage{mathtools}
\usepackage{booktabs}        \usepackage{tikz}            \usetikzlibrary{calc}        \usepackage{csquotes}

\usepackage[colorinlistoftodos, bordercolor=orange, backgroundcolor=orange!20, linecolor=orange, textsize=scriptsize]{todonotes}

\usepackage[noend]{algorithmic}

\usepackage{calc,ifthen,alltt,tabularx,capt-of}
\usepackage[linesnumbered,longend,ruled,vlined]{algorithm2e}

\usepackage[
   natbib,
   style=alphabetic,
   sorting=nty,
   maxbibnames=5, maxalphanames=5, isbn=false,url=false]
{biblatex}
\usepackage{lineno}
\usepackage[bookmarks=false]{hyperref}

\usepackage{longtable}
\usepackage{makecell}
\usepackage{wrapfig}

\usepackage[capitalise, noabbrev]{cleveref} \usepackage{enumitem}

\newcommand\OSCAR{\texttt{OSCAR}\xspace}

 \newcommand\ZZ{\mathbb{Z}} \newcommand\QQ{\mathbb{Q}} 
\newcommand\RR{\mathbb{R}} \newcommand\CC{\mathbb{C}}

\DeclareMathOperator{\conv}{conv}

\definecolor{promptColor}{rgb}{0.0,0.0,0.589}
\definecolor{brkpromptColor}{rgb}{0.589,0.0,0.0}
\definecolor{gapinputColor}{rgb}{0.589,0.0,0.0}
\definecolor{gapoutputColor}{rgb}{0.0,0.0,0.0}
\definecolor{darkgreen}{rgb}{0.05,0.6,0.1}
\definecolor{colrem}{rgb}{0,0.7,0}

\spnewtheorem{algorithm2}{Algorithm}{\bfseries}{\itshape}

\DeclareMathOperator{\Trop}{Trop}

\DeclareMathOperator{\mult}{mult}
\DeclareMathOperator{\trop}{trop}

\DeclareMathOperator{\val}{val}

\newcounter{truefigure}
\setcounter{truefigure}{1}

\makeatletter
\renewcommand{\p@subfigure}{}
\makeatother

\captionsetup[subfigure]{labelformat=simple}

\newcommand{\PreserveBackslash}[1]{\let\temp=\\#1\let\\=\temp}
\newcolumntype{C}[1]{>{\PreserveBackslash\centering$}p{#1}<{$}}
\newcolumntype{R}[1]{>{\PreserveBackslash\raggedleft$}p{#1}<{$}}
\newcolumntype{L}[1]{>{\PreserveBackslash\raggedright$}p{#1}<{$}}

\newcommand{\TT}{\mathbb{T}}
\newcommand{\stsect}{\wedge}
\newcommand{\lin}{{\text{lin}}}
\newcommand{\nlin}{{\text{nlin}}}

\DeclareMathOperator{\codim}{codim}
\DeclareMathOperator{\initial}{in}
\DeclareMathOperator{\relint}{relint}
\DeclareMathOperator{\spanoperator}{span}

\spnewtheorem{convention}{Convention}{\bfseries}{\rmfamily}

\makeindex

\addbibresource{../../../include/references.bib}
\begin{document}
\title*{Generic root counts of tropically transverse systems}
\subtitle{An invitation to tropical geometry in \OSCAR}
\author{Isaac Holt and Yue Ren}
\institute{Isaac Holt \at Durham University, Department of Mathematics, Durham DH1 3LE, United Kingdom \email{isaac.a.holt@durham.ac.uk}
  \and Yue Ren \at Durham University, Department of Mathematics, Durham DH1 3LE, United Kingdom \email{yue.ren2@durham.ac.uk}}
\maketitle
\abstract{The main mathematical focus of this paper is a class of para\-metrised polynomial systems that we refer to as being tropically transverse.  We show how their generic number of solutions can be expressed as the mixed volume of a modified system.  We then provide an alternate proof of a recent result by Borovik et al, on the number of equilibria of coupled nonlinear oscillators using elementary tropical geometry.  The proof draws upon a wide range of concepts across tropical geometry, which we will use as an opportunity to give a first overview over various tropical features in \OSCAR.}

\section{Introduction}\label{sec:introduction}
Tropical geometry is often described as a piecewise linear analogue of algebraic geometry.  Its concepts arise naturally in many applications both inside and outside mathematics.  Due to its explicit nature, algorithms and computation have always played a central role in tropical geometry.  Prominent examples include disproving strong polynomiality of certain barrier interior points methods in linear optimisation by computing tropicalisations of central paths \cite{ABGJ18,AGV22}, proving finiteness of certain central configurations in the $N$-body problem by computing tropical prevarieties of their algebraic equations \cite{HamptonMoeckel06,HamptonJensen11}, or breakthroughs in the geometry of moduli spaces by computing tropicalisations of linear series \cite{FJP20}.

This paper addresses the problem of computing the generic number of solutions of a certain class of parametrized polynomial systems, which we call \emph{tropically transverse}.  These are a specialisation of the parametrised polynomial systems considered by Kaveh and Khovanskii in their work on Newton-Okounkov bodies \cite{KavehKhovanskii2012}, and examples include the polynomials describing the stationary motion of coupled nonlinear oscillators studied by Borovik, Breiding, Del Pino, Michalek and Zilberberg \cite{BBPMZ23}.  One question of particular interest about parametrised polynomial system is that of their generic root count.  It may be interesting in its own right, such as the Euclidean distance degree \cite{DHOST14} or the maximum likelihood degree \cite{CHKS06}, but it is also a crucial piece of information for solving polynomial systems via numerical algebraic geometry \cite{BBCHLS23}.
We show that the generic root count of tropically transverse systems can be expressed as the mixed volume of a modified system in \cref{thm:tropicallyTransverseSystem}, and provide an alternate proof of \cite[Theorem 5.1]{BBPMZ23} using elementary tropical geometry in \cref{thm:mainSystem}.

Before tackling tropically transverse systems, this paper serves as an introduction and invitation to the tropical package in \OSCAR \cite{OSCAR}.  While the package is still rapidly growing and evolving, it brings as part of the greater \OSCAR and \texttt{julia} ecosystem a plethora of powerful and sophisticated algorithms within arm's reach of the tropical user.

All algorithms pertaining to tropically transverse systems are openly available under
\begin{center}
  \url{https://github.com/isaacholt100/generic_root_count}
\end{center}

\section{Tropical functionalities in OSCAR}\label{sec:tropicalOverview}
In this section, we give a very brief rundown on some tropical features in \OSCAR whilst introducing some necessary concepts for the rest of the paper.  Our notation generally follows \cite{MS2015}, though we may need to deviate from it for practical reasons.

\subsection{Tropical semirings}
First, we begin with the most fundamental building block of tropical geometry that is the tropical semiring.

\begin{definition}\label{def:tropicalSemiring}
  While tropical semirings in \cite[Section 1.1]{MS2015} are extensions of the real numbers, tropical semirings in \OSCAR are either
  \begin{align*}
    \TT_{\max}&\coloneqq (\QQ\cup\{-\infty\},\oplus,\odot)\text{ with }a\oplus b\coloneqq \max(a,b), a\odot b\coloneqq a+b \text{, or}\\
    \TT_{\min}&\coloneqq (\QQ\cup\{+\infty\},\oplus,\odot)\text{ with }a\oplus b\coloneqq \min(a,b), a\odot b\coloneqq a+b \text{ (default).}
  \end{align*}
  This allows \OSCAR to avoid precision issues that arise when working with real numbers.  $\TT_{\min}$ and $\TT_{\max}$ are referred to as the \emph{min-plus} or \emph{max-plus semiring}, respectively.  Their elements are called \emph{tropical numbers}.
\end{definition}

\begin{example}[Tropical semiring and tropical numbers]\label{ex:tropicalSemiring}
  Like \cite{MS2015}, \OSCAR follows the min-convention.  This means that the tropical semiring is the min-plus semiring by default:

  \noindent
  \inputminted{jlcon}{semiring.jlcon}

  \noindent
  Note:
  \begin{enumerate}[leftmargin=*]
  \item Tropical numbers are printed in parenthesis to distinguish them from rational numbers.
  \item Rational numbers $0,1\in\QQ$ are converted to tropical numbers $0,1\in\mathbb{T}$, not the neutral elements of tropical addition and multiplication. Similarly, $-1\in\QQ$ is converted to $-1\in\mathbb{T}$, the tropical multiplicative inverse of $1\in\mathbb{T}$ and not the tropical additive inverse  which does not exist.
  \item Tropical semirings are ordered \cite[Section 2.7]{ETC}. For two rational numbers $a<b$ we have
  \[ a<b \text{ in } \TT_{\min}\quad\text{and}\quad a>b \text{ in } \TT_{\max}. \]
  Consequently, $\mp\infty$ are the smallest elements of their semiring, and the conversion from rational to tropical numbers preserves the ordering for $\TT_{\min}$ and reverses the ordering for $\TT_{\max}$.  The ordering is relevant for initial forms and ideals in \cref{def:initialIdeal}.
  \end{enumerate}
\end{example}

\begin{example}[Tropical matrices and polynomials]
  In \OSCAR, tropical semirings are concrete subtypes of \texttt{field} in which subtraction is undefined.  Albeit deviating from math semantics, it allows us to access all generic features thereover, such as matrices and polynomials:

  \noindent
  \inputminted{jlcon}{matrixAndPoly.jlcon}

  \noindent
  However, functions whose generic implementation requires subtraction will raise an error at runtime.  In tropical mathematics, these concepts usually have subtractionless counterparts, such as the \emph{tropical determinant} \cite[Equation 1.2.6]{MS:2015} \cite[Chapter 3.1]{Joswig21},
  \[ \det(X)\coloneqq \bigoplus_{\pi\in S_n}x_{1\pi(1)}\odot\dots\odot x_{n\pi(n)}\quad\text{for }X=(x_{ij})_{i,j=1,\dots,n}\in\TT^{n\times n}, \]
  and providing implementations of specialised tropical functions will make \OSCAR avoid the generic implementation when called with tropical input:
  \inputminted{jlcon}{det.jlcon}
\end{example}

\subsection{Valued fields}
Next, we introduce some tropical notions over valued fields, which are central to many applications of tropical geometry, in particular in and around algebraic geometry.

\begin{convention}\label{con:valuedField}
  For the remainder of the paper, let $K$ denote a field with a valuation, i.e., a field with a map $\val\colon K^\ast\rightarrow\RR$ satisfying
  \begin{enumerate}
  \item $\val(a\cdot b)=\val(a)+\val(b)$,
  \item $\val(a+b)\geq\min(\val(a),\val(b))$.
  \end{enumerate}
  This valuation naturally induces a \emph{ring of integers} $R\coloneqq \{c\in K\mid \val(c)\geq 0\}$, a maximal ideal $\mathfrak m\coloneqq \{c\in R\mid \val(c)>0\}\unlhd R$ and a \emph{residue field} $\mathfrak K \coloneqq R/\mathfrak m$.

  We further assume that there is a fixed splitting homomorphism\linebreak $\psi\colon (\val(K^\ast),+)\rightarrow (K^\ast,\cdot)$ such that $\val(\psi(r))=r$.  Such splitting homomorphism exists for all algebraically closed fields \cite[Lemma 2.1.15]{MS2015}, but also many others.  For example:
  \begin{itemize}
  \item If $\val$ is trivial, i.e., $\val(c)=0$ for all $c\in K^\ast$, then $\{0\}\rightarrow K^\ast, 0\mapsto 1$ is a viable splitting homomorphism.
  \item If $K=\QQ_p$ is the field of $p$-adic numbers and $\val$ is the usual $p$-adic valuation, then $\ZZ\rightarrow\QQ_p^\ast, r\mapsto p^r$ is a viable splitting homomorphism.
  \item If $K=\QQ(t)$ is the field of rational functions in $t$ and $\val$ is the $t$-adic valuation, then $\ZZ\rightarrow\QQ(t)^\ast, r\mapsto t^r$ is a viable splitting homomorphism.
  \end{itemize}
  Following the notation of \cite{MS2015}, we abbreviate $t^r\coloneqq\psi(r)$.
\end{convention}

\begin{definition}\label{def:initialIdeal}
  To each polynomial $f\in K[x]\coloneqq K[x_1,\dots,x_n]$, say $f=\sum_{\alpha\in\ZZ_{\geq 0}^n}c_\alpha x^\alpha$ with $c_\alpha\in K$, we can assign a tropical polynomial $\trop(f)\in\TT[x]$ defined as follows:
  \begin{equation*}
    \trop(f)\coloneqq
    \begin{cases}
      \displaystyle\bigoplus_{c_\alpha\neq 0}\val(c_\alpha)\odot x^{\odot \alpha} &\text{if }\TT=\TT_{\min}, \\[7mm]
      \displaystyle\bigoplus_{c_\alpha\neq 0}-\val(c_\alpha)\odot x^{\odot \alpha} &\text{if }\TT=\TT_{\max}.
    \end{cases}
  \end{equation*}

  The \emph{initial form} of $f$ with respect to a weight vector $w\in\RR^n$ is then defined as
  \[ \initial_w(f)\coloneqq \sum_{\substack{\trop(c_\alpha x^\alpha)(w)\\=\trop(f)(w)}} \overline{t^{-\val(c_\alpha)} c_\alpha}\cdot x^\alpha\in \mathfrak K[x], \]
  where $\overline{(\cdot)}\colon R\twoheadrightarrow\mathfrak K$ denotes the quotient map.

  The \emph{initial ideal} of an ideal $I\unlhd K[x]$ with respect to a weight vector $w\in\RR^n$ is given by
  \[ \initial_w(I)\coloneqq\langle \initial_w(f)\mid f\in I\rangle\unlhd \mathfrak K[x]. \]

  A \emph{Gr\"obner basis} of an ideal $I\unlhd K[x]$ with respect to a weight vector $w\in\RR^n$ is a finite set $G\subseteq I$ such that $\initial_w(I)=\langle\initial_w(g)\mid g\in G\rangle$.  Note that, any Gr\"obner basis of $I$ is a generating set provided $I$ is homogeneous \cite[Remark 2.4.4]{MS2015}.
\end{definition}

\begin{example}\label{ex:groebnerBases}
  In \OSCAR, tropical features over a valued field $K$ generally require specifying a map to a tropical semiring $\nu\colon K^\ast\rightarrow\TT$ satisfying
  \begin{enumerate}
  \item $\nu(a\cdot b)=\nu(a)+\nu(b)$,
  \item $\nu(a+b)\geq\min(\nu(a),\nu(b))$ (in the previously defined ordering on $\TT$).
  \end{enumerate}
  Most commonly, $\nu(c)=\val(c)$ for $\TT=\TT_{\min}$ and $\nu(c)=-\val(c)$ for $\TT=\TT_{\max}$. Hence, $\nu$ encodes both the valuation as well as the choice of min- or max-convention.  Currently, only specific maps are supported such as:
  \begin{enumerate}
  \item trivial valuations on any field:
    \inputminted{jlcon}{semiringMaps1.jlcon}
  \item $p$-adic valuations on $\QQ$:
    \inputminted{jlcon}{semiringMaps2.jlcon}
  \item $t$-adic valuations on any rational function field:
    \inputminted{jlcon}{semiringMaps3.jlcon}
  \end{enumerate}

  As in \cref{ex:tropicalSemiring}, the default codomain of tropical semiring maps is $\TT_{\min}$.  The tropical semiring map is usually passed as input, for example:
  \inputminted{jlcon}{groebner.jlcon}

  The code above shows \cite[Example 2.4.3]{MS:2015}, Gr\"obner bases of the ideal $I\coloneqq \langle x_1+2x_2-3x_3,3x_2-4x_3+5x_4\rangle\unlhd\QQ[x_1,\dots,x_4]$ with respect to the $2$-adic valuation and the two weight vectors $(0,0,0,0)\in\Trop(V(I))$ and $(1,0,0,1)\notin\Trop(V(I))$.  Note that the initial ideals in tropical geometry are defined over the residue field $\mathbb{F}_2$.
\end{example}

\subsection{Tropical varieties}

Next, we recall the notion of tropical varieties.  While \cite{MS2015} considers tropicalizations of very affine varieties $V(I)$, we consider tropicalizations of (Laurent) polynomial ideal $I$.  Consequently, our tropicalizations are weighted polyhedral complexes instead of supports thereof.  The reason for this is two-fold:
\begin{enumerate}
\item From \cref{sec:genericRootCount} onwards, we are interested in the number of points in $V(I)$ counted with multiplicity, so the tropical multiplicities matter.
\item For \OSCAR, it is much easier to work with concrete polyhedral complexes rather than supports thereof.
\end{enumerate}
Essentially, our definition of tropical varieties entails part of \cite[Structure Theorem 3.3.5]{MS2015}.

\begin{definition}\label{def:groebnerComplex}
  Let $I\unlhd K[x]$ be a homogeneous ideal.  Then the \emph{Gr\"obner polyhedron} of $I$ around a weight vector $w\in\RR^n$ is defined to be
  \[ C_w(I)\coloneqq \overline{\{ v\in\RR^n\mid \initial_v(I)=\initial_w(I)\}}, \]
  and the set of all Gr\"obner polyhedra is called the \emph{Gr\"obner complex} $\Sigma(I)\coloneqq\{C_w(I)\mid w\in\RR^n\}$.  As the name suggest, Gr\"obner polyhedra are convex polyhedra and the Gr\"obner complex is a finite polyhedral complex \cite[Theorem 2.5.3]{MS2015}.
\end{definition}

\begin{definition}\label{def:tropicalVariety}
  The \emph{tropical variety} of an homogeneous ideal $I\unlhd K[x]$ is defined to be the following polyhedral complex consisting of all Gr\"obner cones whose initial ideals are monomial free:
  \[ \Trop(I)\coloneqq \{C_w(I)\mid \initial_w(I) \text{ monomial-free}\} \]
  The \emph{tropical variety} of an arbitrary ideal $I\unlhd K[x]$ is defined to be
  \[ \Trop(I)\coloneqq \Big\{C_w(I^h) \cap \{e_0=0\} \mid C_w(I^h)\in\Trop(I^h) \Big\}, \]
  where $I^h\unlhd K[x_0,\dots,x_n]$ denotes the homogenization of $I$ in $x_0$, whose Gr\"obner polyhedra $C_w(I^h)$ lie in $\RR^{n+1}$ with standard basis vectors $e_0,\dots,e_n$, and we naturally identify the subspace $\{e_0=0\}\subseteq\RR^{n+1}$ with $\RR^n$.  Note that both definitions coincide for homogeneous ideals.

  Moreover, there is a canonical way to assign multiplicities to the maximal polyhedra of $\Trop(I)$ via the initial ideals \cite[Definition 3.4.3]{MS2015} that make $\Trop(I)$ a balanced polyhedral complex if $I$ is prime \cite[Theorem 3.3.5]{MS2015}.  For the sake of brevity, we omit their general definition and instead introduce them only for the special classes of tropical varieties that are of immediate interest to us below.
\end{definition}

\begin{remark}\label{rem:tropicalVariety}
  The umbrella term tropical variety extends well beyond the tropicalization of ideals covered in \cref{def:tropicalVariety}.  Most notably, there is an incredibly rich theory on the moduli of tropical curves, abstract graphs without a fixed embedding into some $\RR^n$, whose significance extends well beyond the area of tropical geometry itself \cite{Markwig2020}.

  \OSCAR uses parameters to distinguish between embedded and abstract tropical varieties, as well as min- and max-tropical varieties.  Moreover, there are four distinct types of tropical varieties, see Examples \ref{ex:tropicalHypersurface} to \ref{ex:tropicalVariety}.  This is because some functions only make sense for some types (e.g., tropical Pluecker vectors for tropical linear spaces), while other functions can be aggressively optimised depending on the type (e.g., stable intersection of tropical hypersurfaces).  Embedded tropical varieties are always weighted polyhedral complexes.  Abstract tropical varieties depend on the type and may not be defined for all four types.  \end{remark}

\begin{example}\label{ex:tropicalVariety}\
  Objects of type \texttt{TropicalVariety} represent the most general type of tropical varieties and currently need to be embedded, abstract general tropical varieties are presently not supported.  Embedded \texttt{TropicalVariety}s are weighted polyhedral complexes, that need not be balanced or pure.  They can be constructed from polynomial ideals:
  \inputminted{jlcon}{variety.jlcon}
  The command \texttt{tropical\_variety(::MPolyIdeal)} returns an array of tropical varieties, one for each primary factor. The example above consists of a two-dimensional tropicalized binomial variety and a one-dimensional tropicalized linear space, see \cref{fig:tropicalVariety}.  It is currently limited to special types of ideals, and an extension to general ideals is in the works.

  Objects of type \texttt{TropicalVariety} also arise naturally from the other types, e.g., as the (stable) intersection of two tropical hypersurfaces, see \cref{ex:stableIntersection}.
\end{example}

\begin{figure}[h]
  \centering
  \begin{tikzpicture}[x={(240:1cm)},y={(0:1cm)},z={(90:1cm)}]
    \fill[blue!20]
    ($(0,-1,-0.25)+(-1,1,0)+0.75*(1,1,-2)$)
    -- ($(0,-1,-0.25)-(-1,1,0)+0.75*(1,1,-2)$)
    -- ($(0,-1,-0.25)-(-1,1,0)-0.75*(1,1,-2)$)
    -- node[sloped,above,blue!20!black] {$\Trop(t^3xyz-1)$} ($(0,-1,-0.25)+(-1,1,0)-0.75*(1,1,-2)$)
    -- cycle;

    \draw[thick,->]
    (0,0,0) -- ++(2,0,0) node[anchor=north] {$e_x$};
    \draw[thick,->]
    (0,0,0) -- node[anchor=north west,yshift=-1mm] {$\Trop(\langle x+y+z+1,$} node[anchor=north west,yshift=-6mm] {$\phantom{\Trop(\langle} 2x+11y+23z+31\rangle)$} ++(0,2,0) node[anchor=west] {$e_y$};
    \draw[thick,->]
    (0,0,0) -- ++(0,0,2) node[anchor=south] {$e_z$};
    \draw[thick]
    (0,0,0) -- ++(-2,-2,-2);
    \draw[thick,dashed]
    (0,0,0)++(-2,-2,-2) -- ++(-3,-3,-3);
    \draw[thick,->]
    (0,0,0)++(-5,-5,-5) -- ++(-1,-1,-1) node[anchor=east] {$-e_x-e_y-e_z$};
    \coordinate (pt) at (-2,-2,-2);
    \fill[blue!60!black] (pt) circle (2.5pt);
    \node[anchor=south,blue!60!black,xshift=-5mm,yshift=0mm] at (pt) {$(-1,-1,-1)$};
    \fill[black] (0,0,0) circle (2.5pt);
  \end{tikzpicture}
  \caption{The tropical variety in \cref{ex:tropicalVariety}.}
  \label{fig:tropicalVariety}
\end{figure}
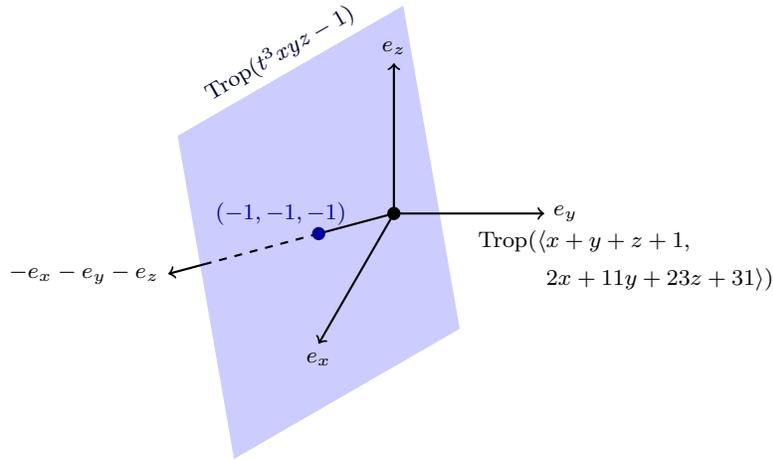

\begin{example}\label{ex:tropicalHypersurface}
  If $I$ is generated by a single polynomial $f=\sum_{\alpha\in\ZZ^n_{\geq 0}}c_\alpha x^\alpha$, $c_\alpha\in K$, then $\Trop(f)\coloneqq\Trop(I)$ is a \emph{tropical hypersurface} \cite[Section 3.1]{MS2015}.  Its cells are dual to the regular subdivision of its Newton polytope $\conv\{\alpha\mid c_\alpha\neq 0\}$ induced by the valuation of its coefficients \cite[Lemma 3.4.6]{MS2015}.  Its maximal cells are dual to edges and their multiplicities are the lattice lengths of the edges.  \cref{fig:tropicalHypersurface} illustrates the Newton subdivisions and max-tropical hypersurfaces of
  \begin{align*}
    f \coloneqq & t^3+x+t^2\cdot y+x(x^2+y^2)\quad\text{and}\\
    g \coloneqq & t^4+t^4\cdot x+t^2\cdot y+y(x^2+y^2) \in \CC(t)[x,y].
  \end{align*}

  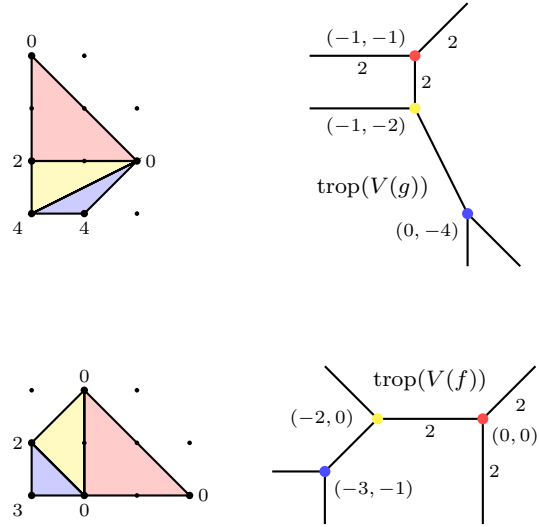
\begin{figure}[t]
    \centering
    \begin{tikzpicture}
      \node[anchor=south west] (NewtonF) at (0,0)
      {
        \begin{tikzpicture}[scale=0.7]
          \fill[blue!20] (0,0) -- (1,0) -- (0,1);
          \fill[yellow!30] (1,0) -- (1,2) -- (0,1);
          \fill[red!20] (1,0) -- (3,0) -- (1,2);
          \draw[thick]
          (0,0) -- (1,0) -- (0,1) -- cycle
          (1,0) -- (1,2) -- (0,1) -- cycle
          (1,0) -- (3,0) -- (1,2) -- cycle;
          \fill
          (0,0) circle (2pt)
          (1,0) circle (2pt)
          (2,0) circle (1.25pt)
          (3,0) circle (2pt)
          (0,1) circle (2pt)
          (1,1) circle (1.25pt)
          (2,1) circle (1.25pt)
          (3,1) circle (1.25pt)
          (0,2) circle (1.25pt)
          (1,2) circle (2pt)
          (2,2) circle (1.25pt);
          \node[anchor=north east,font=\scriptsize] at (0,0) {$3$};
          \node[anchor=north,font=\scriptsize] at (1,0) {$0$};
          \node[anchor=east,font=\scriptsize] at (0,1) {$2$};
          \node[anchor=west,font=\scriptsize] at (3,0) {$0$};
          \node[anchor=south,font=\scriptsize] at (1,2) {$0$};
        \end{tikzpicture}
      };
      \node[anchor=west,xshift=5mm] (TropF) at (NewtonF.east)
      {
        \begin{tikzpicture}[scale=0.7]
          \draw[thick]
          (0,0) -- node[anchor=north west,font=\scriptsize,inner sep=2pt] {$2$} ++(1,1)
          (0,0) -- node[right,font=\scriptsize,inner sep=2pt] {$2$} ++(0,-2)
          (0,0) -- node[below,font=\scriptsize,inner sep=2pt] {$2$} (-2,0)
          (-2,0) -- ++(-1,1)
          (-2,0) -- (-3,-1)
          (-3,-1) -- ++(-1,0)
          (-3,-1) -- ++(0,-1);
          \fill[red!70] (0,0) circle (3pt);
          \fill[yellow!80] (-2,0) circle (3pt);
          \fill[blue!70] (-3,-1) circle (3pt);
          \node[anchor=north west,font=\scriptsize] at (0,0) {$(0,0)$};
          \node[anchor=east,xshift=-2mm,font=\scriptsize] at (-2,0) {$(-2,0)$};
          \node[anchor=north west,font=\scriptsize] at (-3,-1) {$(-3,-1)$};
          \node[anchor=south,yshift=2.5mm] at (-1,0) {$\trop(V(f))$};
        \end{tikzpicture}
      };
      \node[anchor=south west] (NewtonG) at (0,3.75)
      {
        \begin{tikzpicture}[scale=0.7]
          \fill[blue!20] (0,0) -- (1,0) -- (2,1);
          \fill[yellow!30] (0,0) -- (2,1) -- (0,1);
          \fill[red!20] (2,1) -- (0,1) -- (0,3);
          \draw[thick]
          (0,0) -- (1,0) -- (2,1) -- cycle
          (0,0) -- (2,1) -- (0,1) -- cycle
          (2,1) -- (0,1) -- (0,3) -- cycle;
          \fill
          (0,0) circle (2pt)
          (1,0) circle (2pt)
          (2,0) circle (1.25pt)
          (0,1) circle (2pt)
          (1,1) circle (1.25pt)
          (2,1) circle (2pt)
          (0,2) circle (1.25pt)
          (1,2) circle (1.25pt)
          (2,2) circle (1.25pt)
          (0,3) circle (2pt)
          (1,3) circle (1.25pt);
          \node[anchor=north east,font=\scriptsize] at (0,0) {$4$};
          \node[anchor=north,font=\scriptsize] at (1,0) {$4$};
          \node[anchor=east,font=\scriptsize] at (0,1) {$2$};
          \node[anchor=west,font=\scriptsize] at (2,1) {$0$};
          \node[anchor=south,font=\scriptsize] at (0,3) {$0$};
        \end{tikzpicture}
      };
      \node[anchor=west,xshift=17mm] (TropG) at (NewtonG.east)
      {
        \begin{tikzpicture}[scale=0.7]
          \draw[thick]
          (-1,-1) -- node[anchor=north west,font=\scriptsize,inner sep=2pt] {$2$} ++(1,1)
          (-1,-1) -- node[below,font=\scriptsize,inner sep=2pt] {$2$} ++(-2,0)
          (-1,-1) -- node[right,font=\scriptsize,inner sep=2pt] {$2$} (-1,-2)
          (-1,-2) -- ++(-2,0)
          (-1,-2) -- (0,-4)
          (0,-4) -- ++(0,-1)
          (0,-4) -- ++(1,-1);
          \fill[red!70] (-1,-1) circle (3pt);
          \fill[yellow!80] (-1,-2) circle (3pt);
          \fill[blue!70] (0,-4) circle (3pt);
          \node[anchor=south east,font=\scriptsize] at (-1,-1) {$(-1,-1)$};
          \node[anchor=north east,font=\scriptsize] at (-1,-2) {$(-1,-2)$};
          \node[anchor=north east,font=\scriptsize] at (0,-4) {$(0,-4)$};
          \node[anchor=east,xshift=-4mm] at (0,-3.5) {$\trop(V(g))$};
        \end{tikzpicture}
      };
    \end{tikzpicture}
    \caption{Two tropical hypersurfaces and their Newton subdivisions}\label{fig:tropicalHypersurface}
  \end{figure}

  In \OSCAR, objects of type \texttt{TropicalHypersurface} need to be embedded and are objects of type \texttt{TropicalVariety} that are of codimension one. They can constructed using
  \begin{enumerate}
  \item polynomials over valued fields:
    \inputminted{jlcon}{hypersurface1.jlcon}
    Note above that applying \texttt{typeof} to the seven entries of the output of \texttt{vertices\_and\_rays} reveals that the first three are vertices whilst the latter four are ray generators.  Hence \texttt{[1,3]} in the output of \texttt{maximal\_polyhedra} is the cone \texttt{}
  \item polynomials over tropical semirings:
    \inputminted{jlcon}{hypersurface2.jlcon}
  \item subdivision of points:
    \inputminted{jlcon}{hypersurface3.jlcon}
  \end{enumerate}
\end{example}

\begin{example}\label{ex:tropicalLinearSpace}\
  If $I$ is generated by linear forms, then $\Trop(I)$ is a (realizable) \emph{tropical linear space} \cite[Section 4.4]{MS2015} \cite[Section 10.4 and 10.5]{Joswig21}.  It is dual to a matroid subdivision of the hypersimplex, and the support of its recession fan is the support of a Bergman fan.  Its maximal cells are all of multiplicity $1$.  Note that \OSCAR returns the same polyhedral structure as \textsc{polymake}, which is the one computed by Rincon's algorithm using fundamental circuits, where cyclic flats give rise to rays \cite{Rincon13}.

  In \OSCAR, objects of type \texttt{TropicalLinearSpace} need to be embedded and can be thought of objects of type \texttt{TropicalVariety} that are of degree $1$ with respect to the intersection product in \cref{def:stableIntersection}.  A tropical linear space may be constructed from
  \begin{enumerate}
  \item ideals over valued fields:
    \inputminted{jlcon}{linearSpace1.jlcon}
  \item matrices over valued fields or tropical semirings:
    \inputminted{jlcon}{linearSpace2.jlcon}
    In the example above \texttt{TropL1} is not the same as \texttt{TropL2}, as the tropicalised minors of a matrix need not be the minors of a tropicalised matrix.  In fact, \texttt{TropL1} cannot be constructed from a tropical matrix.  Tropical linear spaces which can be constructed from tropical matrices are known as Stiefel topical linear spaces \cite{FinkRincon15}.
  \item Pl\"ucker vectors over valued fields or tropical semirings:
    \inputminted{jlcon}{linearSpace3.jlcon}
  \end{enumerate}
\end{example}

\begin{example}\label{ex:tropicalCurve}
  If $\dim(I)=1$, then $\Trop(I)$ is an (embedded) tropical curve.  In \OSCAR, objects of type \texttt{TropicalCurve} are either weighted polyhedral complexes of dimension $1$ if embedded or weighted graphs if abstract.  Embedded \texttt{TropicalCurve}s can be constructed by converting other tropical varieties.
  \inputminted{jlcon}{curve1.jlcon}

  Abstract \texttt{TropicalCurve} can be constructed from graphs (if no weights are specified, they are set to $1$ by default).
  \inputminted{jlcon}{curve2.jlcon}
\end{example}

\subsection{Stable intersections}
Finally, we recall tropical stable intersection and tropical intersection products, which is a central concept for \cref{sec:genericRootCount} onwards.

\begin{definition}\label{def:stableIntersection}
  Let $\Sigma_1$, $\Sigma_2$ be two balanced polyhedral complexes in $\RR^n$.  Their \emph{stable intersection} is defined to be
  \[ \Sigma_1\stsect\Sigma_2 \coloneqq \Big\{ \sigma_1\cap\sigma_2\mid \sigma_1\in\Sigma_1,\sigma_2\in\Sigma_2,\dim(\sigma_1+\sigma_2)=n \Big\} \]
  with multiplicities
  \[ \mult_{\Sigma_1\stsect\Sigma_2}(\sigma_1\cap\sigma_2) \coloneqq \sum_{\tau_1,\tau_2}\mult_{\Sigma_1}(\tau_1)\mult_{\Sigma_2}(\tau_2)[N:N_{\tau_1}+N_{\tau_2}], \]
  where the sum is over all $\tau_1\in\mathrm{star}_{\Sigma_1}(\sigma_1\cap\sigma_2)$ and $\tau_2\in\mathrm{star}_{\Sigma_2}(\sigma_1\cap\sigma_2)$ with $\tau_1\cap (\tau_2+v)\neq\emptyset$ for some fixed generic $v\in\RR^n$.  It is either empty, or again a balanced polyhedral complex of codimension $\codim(\Sigma_1)+\codim(\Sigma_2)$ \cite[Theorem 3.6.10]{MS2015}.  Alternatively, its support can also be defined as the intersection under generic perturbation \cite[Proposition 3.6.12]{MS2015}.

  If $\Sigma_1,\dots,\Sigma_r$ are balanced polyhedral complexes in $\RR^n$ of complementary dimension, i.e., $\sum_{i=1}^r\codim(\Sigma_i)=n$, then their \emph{tropical intersection number} $\prod_{i=1}^r\Sigma_i$ is the number of points in their stable intersection $\wedge_{i=1}^r \Sigma_i$ counted with multiplicity.
\end{definition}

\begin{example}\label{ex:stableIntersection}
  \cref{fig:stableIntersection} illustrates the stable intersection of their tropical hypersurfaces from \cref{ex:tropicalHypersurface}.  It can be computed using \texttt{stable\_intersection}:
  \inputminted{jlcon}{intersection.jlcon}
\end{example}

\begin{figure}
  \centering
  \begin{tikzpicture}
    \node at (0,0)
    {
      \begin{tikzpicture}[scale=0.7]
        \draw[blue!50!black,thick]
        (0,0) -- ++(1,1)
        (0,0) -- ++(0,-5)
        (0,0) -- (-2,0)
        (-2,0) -- ++(-1,1)
        (-2,0) -- (-3,-1)
        (-3,-1) -- ++(-1,0)
        (-3,-1) -- ++(0,-4);
        \fill[blue!50!black]
        (0,0) circle (2pt)
        (-2,0) circle (2pt)
        (-3,-1) circle (2pt);

        \begin{scope}[xshift=-5mm,yshift=5mm]
          \draw[red!70!black,thick]
          (-1,-1) -- ++(1.5,1.5)
          (-1,-1) -- ++(-2.5,0)
          (-1,-1) -- (-1,-2)
          (-1,-2) -- ++(-2.5,0)
          (-1,-2) -- (0,-4)
          (0,-4) -- ++(0,-1.5)
          (0,-4) -- ++(1.5,-1.5);
          \fill[red!70!black]
          (-1,-1) circle (2pt)
          (-1,-2) circle (2pt)
          (0,-4) circle (2pt);
        \end{scope}

        \fill[white,draw=black]
        (-1,0) circle (2.5pt)
        (-2.5,-0.5) circle (2.5pt)
        (-3,-1.5) circle (2.5pt)
        (0,-4) circle (2.5pt);

        \draw[<-,thick,violet]
        (-1.5,-1) -- node[anchor=south west,font=\scriptsize,inner sep=1pt] {$v$} ++(0.5,-0.5);
      \end{tikzpicture}
    };
    \draw[->,violet,dashed] (2.25,0) -- node[anchor=north] {$v\rightarrow 0$} (3.75,0);
    \node at (6,0)
    {
      \begin{tikzpicture}[scale=0.7]
        \draw[blue!50!black,thick]
        (0,0) -- ++(0,-4)
        (0,0) -- (-2,0)
        (-2,0) -- ++(-1,1)
        (-2,0) -- (-3,-1)
        (-3,-1) -- ++(0,-4);
        \draw[red!70!black,thick]
        (-1,-1) -- ++(1,1)
        (-1,-1) -- ++(-2,0)
        (-1,-1) -- (-1,-2)
        (-1,-2) -- ++(-3,0)
        (-1,-2) -- (0,-4)
        (0,-4) -- ++(1,-1);
        \draw[blue!50!black,very thick,dash pattern= on 3pt off 5pt,dash phase=4pt]
        (0,0) -- ++(1,1)
        (-3,-1) -- ++(-1,0)
        (0,-4) -- ++(0,-1);
        \draw[red!70!black,very thick,dash pattern= on 3pt off 5pt]
        (0,0) -- ++(1,1)
        (-3,-1) -- ++(-1,0)
        (0,-4) -- ++(0,-1);
        \fill[blue!50!black]
        (-2,0) circle (2pt);
        \fill[red!70!black]
        (-1,-1) circle (2pt)
        (-1,-2) circle (2pt);
        \fill[white,draw=black]
        (0,0) circle (2.5pt)
        (-3,-1) circle (2.5pt)
        (-3,-2) circle (2.5pt)
        (0,-4) circle (2.5pt);
      \end{tikzpicture}
    };
  \end{tikzpicture}
  \caption{Stable intersection (white vertices, right) of two tropical plane curves (red and blue, right) arising as a limit of their transverse intersection (white vertices, left) under perturbation by a generic $v$ as $v\rightarrow 0$.}\label{fig:stableIntersection}
\end{figure}
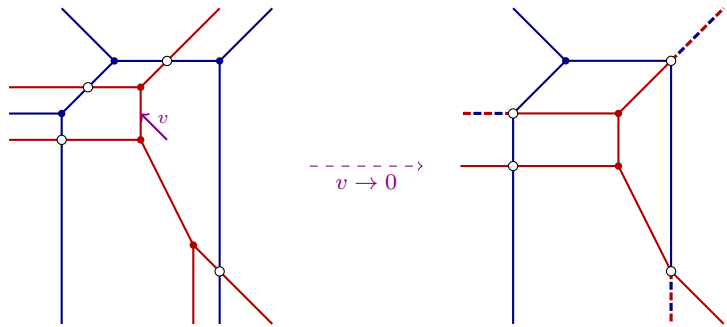

The significance of stable intersections is the fact that they are the ``expected'' tropicalisations when taking the sum of two ideals.  This is for example captured in the following theorem:

\begin{theorem}\label{thm:transverseIntersection}
  Let $I_1,I_2\unlhd \CC[x]$ be complete intersections.  Suppose that $\Trop(I_1)$ and $\Trop(I_2)$ intersect transversely.  Then
  \[ \Trop(I_1 + I_2)=\Trop(I_1)\stsect \Trop(I_2), \]
  where the equality only holds up to refinement of weighted polyhedral complexes.
\end{theorem}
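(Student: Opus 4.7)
The plan is to prove the theorem in two steps: equality of underlying sets, and then matching of multiplicities on maximal cells. Throughout, I fix a weight vector $w$ in the relative interior of a maximal cell of the intersection and work with initial ideals.

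\textbf{Set-theoretic equality.} The inclusion $\Trop(I_1 + I_2) \subseteq \Trop(I_1) \cap \Trop(I_2)$ follows because $V(I_1 + I_2) = V(I_1) \cap V(I_2)$, or directly because any polynomial in $I_j$ lies in $I_1 + I_2$, so monomial-freeness of $\init_w(I_1 + I_2)$ forces monomial-freeness of each $\init_w(I_j)$. The transversality hypothesis allows me to identify $\Trop(I_1)\cap\Trop(I_2)$ with $\Trop(I_1)\stsect\Trop(I_2)$ set-theoretically: by \cite[Proposition 3.6.12]{MS2015} the stable intersection is the generic-perturbation limit of $\Trop(I_1)\cap(\Trop(I_2)+v)$, and under transversality every maximal cell $\sigma_1 \cap \sigma_2$ already attains the expected codimension $\codim(\sigma_1)+\codim(\sigma_2)$, so no perturbation is needed. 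For the reverse inclusion, fix a generic such $w\in\relint(\sigma_1\cap\sigma_2)$. Since $w\in\Trop(I_j)$, each $\init_w(I_j)$ is monomial-free. Transversality of the tangent spaces, combined with the complete intersection hypothesis, forces $\init_w(I_1+I_2)=\init_w(I_1)+\init_w(I_2)$ and ensures that the sum still cuts out a positive-dimensional subvariety of the torus, hence is monomial-free. Therefore $w\in\Trop(I_1+I_2)$.

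\textbf{Multiplicities.} By \cite[Definition 3.4.3]{MS2015}, the tropical multiplicity of a maximal cell of $\Trop(I_1+I_2)$ at $w$ is the sum, over minimal primes of $\init_w(I_1+I_2)$, of their lengths in the primary decomposition. Using the identity $\init_w(I_1+I_2)=\init_w(I_1)+\init_w(I_2)$ established above, the calculation reduces to computing the intersection multiplicity of two torus-invariant subschemes whose lattice data are prescribed by $N_{\sigma_1}$ and $N_{\sigma_2}$. A local computation in the torus, analogous to the one underlying \cite[Theorem 3.6.1]{MS2015}, then factors this intersection multiplicity as $\mult_{\Trop(I_1)}(\sigma_1)\cdot\mult_{\Trop(I_2)}(\sigma_2)\cdot[N:N_{\sigma_1}+N_{\sigma_2}]$, matching the multiplicity assigned to $\sigma_1\cap\sigma_2$ in $\Trop(I_1)\stsect\Trop(I_2)$.

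\textbf{Main obstacle.} The nontrivial step is justifying $\init_w(I_1+I_2)=\init_w(I_1)+\init_w(I_2)$. The inclusion $\supseteq$ is automatic, but the reverse requires a Gröbner-theoretic argument: one needs that the union of a Gröbner basis of $I_1$ with one of $I_2$ remains a Gröbner basis of $I_1+I_2$ with respect to $w$, which is precisely where transversality and the complete intersection property enter. Once this is in hand, the multiplicity count becomes a lattice-index computation, but setting it up cleanly — in particular handling non-reduced components of the initial ideals and comparing them to the multiplicity definitions in $\Sigma(I_1)$ and $\Sigma(I_2)$ — constitutes the technical heart of the theorem.
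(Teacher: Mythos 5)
Your outline correctly separates the theorem into a set-theoretic part and a multiplicity part, and the reductions you sketch are reasonable. However, what you present is not a proof: the two places you flag as difficulties are in fact the entire mathematical content of the theorem, and neither is resolved. You assert in the set-theoretic step that ``transversality of the tangent spaces, combined with the complete intersection hypothesis, forces $\initial_w(I_1+I_2)=\initial_w(I_1)+\initial_w(I_2)$ and ensures that the sum still cuts out a positive-dimensional subvariety of the torus,'' but no argument is given, and your ``Main obstacle'' paragraph then concedes that this is exactly what needs to be shown. Similarly, the multiplicity step is deferred to ``a local computation \dots\ analogous to \cite[Theorem 3.6.1]{MS2015}'' that you acknowledge would constitute ``the technical heart.'' An outline that names the hard steps without proving them is a gap, not a proof.

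Two concrete issues beyond the missing core. First, your monomial-freeness argument is wrong in the most interesting case: when $\codim(I_1)+\codim(I_2)=n$, the scheme cut out by $\initial_w(I_1)+\initial_w(I_2)$ in the torus is zero-dimensional, not positive-dimensional. The genuine danger is that it could be \emph{empty} (equivalently, that $\initial_w(I_1+I_2)$ is the unit ideal), and ruling this out is precisely what requires a lifting argument. Second, the Cohen--Macaulay/complete-intersection hypothesis does not enter your sketch anywhere except as a named deus ex machina; without it the statement is false, so any real proof must show where it is used. The paper deals with all of this by citing Osserman and Payne: their Corollary 5.1.2 gives the set-theoretic equality (this is the lifting result, and is where Cohen--Macaulayness is essential), and their Corollary 5.1.3 gives the agreement of multiplicities with the stable intersection multiplicities. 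If you want a self-contained proof, you would need to reprove something like those two corollaries — in particular, a dimension/flatness argument showing that the scheme-theoretic intersection has the expected dimension at a point tropicalizing to $w$, and a length computation comparing the resulting multiplicity to the lattice index $[N:N_{\sigma_1}+N_{\sigma_2}]$. As written, your proposal identifies the right milestones but proves neither of them.
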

\begin{proof}
  The statement is a consequence of \cite[Corollary 5.1.2]{OssermanPayne13} and \cite[Corollary 5.1.3]{OssermanPayne13} by Osserman and Payne.  The first result shows the equality set-theoretically, while the second result shows that the multiplicities coincide.  Both results require $I_1$ and $I_2$ to be Cohen-Macaulay, which is implied from $I_1$ and $I_2$ being complete intersections.
\end{proof}

\cref{thm:transverseIntersection} is a generalisation of the Transverse Intersection Theorem \cite[Theorem 3.4.12]{MS2015} which relies on \cite[Corollary 5.1.2]{OssermanPayne13}.  From now onward, we will generally consider tropical varieties up to refinement, which does not change the tropical intersection number in \cref{def:stableIntersection}.

\section{Tropical geometry of generic root counts}\label{sec:genericRootCount}

In this section, we recall some basic notions and a result of \cite{HelminckRen22}.

\begin{convention}\label{con:parametrizedPolynomialSystem}
	In addition to Convention~\ref{con:valuedField}, we further fix an affine space $K^m$, which we refer to as the \emph{parameter space}. Let $K[a]\coloneqq K[a_1,\dots,a_m]$ be its coordinate ring and $K(a)\coloneqq K(a_1,\dots,a_m)$ be the field of fractions thereof.

	We will refer to points $P\in K^m$ as \emph{choices of parameters}, elements $f\in K[a][x^\pm]$ as \emph{parametrised (Laurent) polynomials}, ideals $I\subseteq K[a][x^\pm]$ as \emph{parametrised (Laurent) polynomial ideals}, and finite generating sets\linebreak $\{f_1,\dots,f_k\}\subseteq I$ as \emph{parametrised (Laurent) polynomial systems}.
\end{convention}

\begin{definition}
	Let $P\in K^m$ be a choice of parameters.  For any parametrised polynomial $f\in K[a][x^\pm]$, say $f=\sum_{\alpha\in\ZZ^n}c_\alpha x^\alpha$ with $c_\alpha\in K[a]$, we define the \emph{specialisation} of $f$ at $P$ to be
	\begin{equation*}
		f_{P}=\sum_{\alpha\in\ZZ^n}c_\alpha(P)x^\alpha\in K[x^\pm].
	\end{equation*}
	For any parametrised polynomial ideal $I\unlhd K[a][x^\pm]$, we define the \emph{specialisation} of $I$ at $P$ to be
	\begin{equation*}
		I_{P}=\langle f_{P}\mid f\in I \rangle\unlhd K[x^\pm].
	\end{equation*}
	The \emph{root count} of $I$ at $P$ is the vector space dimension
  \begin{equation*}
    \ell_{I,P}\coloneqq \dim_K(K[x^\pm]/I_P)\in\mathbb{N}\cup\{\infty\}.
  \end{equation*}
\end{definition}

\begin{definition}
	The \emph{generic specialisation} of a parametrised polynomial ideal $I\unlhd K[a][x^\pm]$ is the ideal $I_{K(a)}\unlhd K(a)[x^\pm]$ generated by $I$ under the inclusion $K[a][x^\pm]\subseteq K(a)[x^\pm]$.
	The \emph{generic root count} of $I$ is the vector space dimension
  \[ \ell_I\coloneqq\dim_{K(a)}(K(a)[x^\pm]/I_{K(a)})\in\mathbb{N}\cup\{\infty\}. \]
We say $I_1,\dots,I_r\unlhd K[a][x^\pm]$ are \emph{generically of complementary dimension}, if
  \[ \codim(I_{1,K(a)})+\dots +\codim(I_{r,K(a)}) = n, \]
  where $\codim(\cdot)$ denotes the Krull-codimension.
\end{definition}

\begin{definition}
  Recall that the torus $(K^\ast)^n$ is an algebraic group with group multiplication
  \[ (K^\ast)^n\times (K^\ast)^n\rightarrow (K^\ast)^n, \quad (\underbrace{(t_1,\dots,t_n)}_{\eqqcolon t},\underbrace{(s_1,\dots,s_n)}_{\eqqcolon s})\mapsto \underbrace{(t_1s_1,\dots,t_ns_n)}_{\eqqcolon t\cdot s}. \]
  A parametrised polynomial ideal $I\unlhd K[a][x^\pm]$ is \emph{torus-equivariant}, if there is a matching torus action on the parameter space
  \[ (K^\ast)^n\times K^m\rightarrow K^m, \quad (t,P)\mapsto t\cdot P \quad\text{such that } V(I_{t\cdot P}) = t\cdot V(I_P). \]
  Moreover, several parametrised polynomial ideals $I_1,\dots,I_r\unlhd K[a][x^\pm]$ are \emph{parametrically independent}, if their generators have distinct parameters, i.e., $I_i=\langle F_i\rangle$ with $F_i\subseteq K[a_j\mid j\in A_i][x^\pm]$ for some partition $[m]=\coprod_{i=1}^rA_i$.
\end{definition}

\begin{example}
  The principal ideal $I\unlhd K[a_1,\dots,a_5][x_1^\pm,x_2^\pm]$ generated by
  \[ f(a;x) \coloneqq a_1x_1^2+a_2x_1x_2+a_3x_2^2+a_4x_1+a_5x_2+1 \]
  is torus equivariant under the torus action
  \begin{align*}
    (K^\ast)^2\times K^5&\rightarrow K^5,\\
    ((t_1,t_2),(a_1,\dots,a_5))&\mapsto (t_1^{-2}a_1,t_1^{-1}t_2^{-1}a_2,t_2^{-2}a_3,t_1^{-1}a_4,t_2^{-1}a_5),
  \end{align*}
  since
$t\cdot V(f(a;x)) = V(f(a;t^{-1}\cdot x)) = V(f(t\cdot a;x))$.
  Moreover, $I$ is parametrically independent to the principal ideal generated by

  \[ g(a;x) \coloneqq x_1^2+x_1x_2+x_2^2+x_1+x_2+1, \]
  as $f$ and $g$ share no parameters.
\end{example}

One central result for this paper is the following proposition that expresses the generic root count as a tropical intersection number:
\begin{proposition}\cite[Proposition 4.19]{HelminckRen22}\label{prop:HelminckRen}
  Let $I_1,\dots,I_r\unlhd K[a][x^\pm]$ be para\-metrized polynomial ideals generically of complementary dimension, i.e., $\sum_{i=1}^r\codim(I_{i,K(a)})=n$. Suppose $I_1,\dots,I_{r-1}$ are torus equivariant and $I_1,\dots,I_r$ are parametrically independent. Then the generic root count of their sum $I\coloneqq I_1+\dots+I_r$ is their tropical intersection number for a generic choice of parameters:
  \[ \ell_{I} = \prod_{i=1}^r \Trop(I_{i,P})\quad\text{for }P\in K^m \text{ generic.} \]
\end{proposition}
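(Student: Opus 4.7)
The plan is to combine three ingredients: (i) a generic-fibre argument identifying $\ell_I$ with the root count $\ell_{I,P}$ for generic $P$, (ii) a transversality argument that exploits torus equivariance and parametric independence to show the tropicalisations $\Trop(I_{i,P})$ meet transversely in $\RR^n$ for generic $P$, and (iii) an application of \cref{thm:transverseIntersection} to equate $\Trop(I_P)$ with the stable intersection of the $\Trop(I_{i,P})$.

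For step (i), I would observe that since $\sum_{i=1}^r\codim(I_{i,K(a)})=n$, the generic specialisation $I_{K(a)}$ is zero-dimensional in $K(a)[x^\pm]$. By standard semicontinuity (generic freeness of $K[a][x^\pm]/I$ as a $K[a]$-module, or equivalently a Gr\"obner-basis specialisation argument), there exists a dense Zariski-open $U\subseteq K^m$ such that for all $P\in U$ the specialisation $I_P$ is zero-dimensional and $\ell_{I,P}=\ell_I$. Similarly, each $I_{i,P}$ has the expected codimension for $P$ in some dense open set.

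The heart of the argument is step (ii). The key observation is that torus equivariance of $I_i$ (for $i<r$) means that scaling the $A_i$-parameters by a torus element $t_i\in (K^\ast)^n$ translates the tropical variety: $\Trop(I_{i,t_i\cdot P})=\Trop(I_{i,P})+\val(t_i)$. Parametric independence, i.e.\ the partition $[m]=\coprod A_i$, ensures that these translations can be performed independently on each $\Trop(I_{i,P})$ through disjoint blocks of parameters. Hence, varying $P$ in a Zariski-dense way realises arbitrary generic translations $v_1,\dots,v_{r-1}\in\RR^n$ of the first $r-1$ tropicalisations relative to $\Trop(I_{r,P})$. By the perturbation characterisation of stable intersection \cite[Proposition 3.6.12]{MS2015}, for generic such translations the intersection is transverse and coincides with the stable intersection. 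Thus, shrinking $U$ further, for every $P\in U$ the tropicalisations $\Trop(I_{1,P}),\dots,\Trop(I_{r,P})$ meet transversely.

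For step (iii), I would apply \cref{thm:transverseIntersection} inductively to the ideals $I_{1,P},\dots,I_{r,P}$ (the complete-intersection hypothesis being a consequence of the codimensions being attained by the natural generators). This yields
\[ \Trop(I_P)=\Trop(I_{1,P})\stsect\cdots\stsect\Trop(I_{r,P}) \]
up to refinement, with matching multiplicities. Since $I_P$ is zero-dimensional, both sides are finite collections of weighted points in $\RR^n$, and the left-hand total weight equals $\ell_{I,P}=\ell_I$, while the right-hand total weight is by definition the tropical intersection number $\prod_{i=1}^r\Trop(I_{i,P})$.

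The main obstacle will be step (ii): one must carefully check that the torus action $t\cdot P$ on parameters really does induce translation on the tropical variety (including multiplicities) rather than some more complicated transformation, and then argue that a Zariski-generic parameter realises a tropically generic translation. A second, more technical concern is ensuring the complete-intersection hypothesis of \cref{thm:transverseIntersection} holds for $I_{i,P}$ at generic $P$; this should follow from the codimension being achieved by as many generators as the codimension, together with the generic flatness argument from step (i).
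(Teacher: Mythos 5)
The paper does not prove this proposition---it cites \cite[Proposition 4.19]{HelminckRen22} as a black box---so there is no in-paper argument to compare against. Your overall plan (generic fibre count, transversality via equivariant translation, transverse intersection theorem) is the natural shape for such a proof, but step~(ii) as written has a genuine gap.

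Torus equivariance shows that scaling the $A_i$-block of parameters by $t\in(K^\ast)^n$ translates $\Trop(I_{i,P})$ by $\val(t)\in\val(K^\ast)^n$, \emph{not} by an arbitrary vector in $\RR^n$. If $\val$ is trivial (which Convention~\ref{con:valuedField} allows), every such translation is zero, the $\Trop(I_{i,P})$ are fans sharing the origin, and they need not meet transversely no matter how generic $P$ is. Even for a discrete valuation, Zariski-genericity of $P$ does not control the valuations $\val(P_j)$: a Zariski-open dense subset of $K^m$ still contains parameters all of whose coordinates have valuation $0$. So the inference ``shrinking $U$ further, for every $P\in U$ the tropicalisations meet transversely'' fails. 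The missing ingredient is a base change to an extension $L/K$ whose value group is dense in $\RR$, together with the observation that both $\ell_I$ and the tropical intersection number $\prod_i\Trop(I_{i,P})$ are invariant under this extension---the latter precisely because stable intersection is, by \cref{def:stableIntersection}, insensitive to translations and to perturbation. Transversality can then be arranged over $L$, and only at that point can \cref{thm:transverseIntersection} be applied. You flag this worry at the end, but treating it as something to ``carefully check'' rather than as an additional argument understates what is missing.

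A secondary issue: \cref{thm:transverseIntersection} as stated here requires the $I_{i,P}$ to be complete intersections, which the hypotheses of the proposition do not guarantee, and your parenthetical justification (``codimensions attained by the natural generators'') is not implied either---the $I_i$ may have many more generators than their codimension. One needs the Cohen--Macaulay form of the Osserman--Payne result, or an extra reduction, for this step to go through in the stated generality.
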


\section{Tropically transverse systems}\label{sec:tropicallTransverseSystem}
In this section, we introduce the notion of tropically transverse systems in \cref{def:tropicallyTransverseSystem} and use tropical modifications to express their generic root counts in terms of mixed volumes in \cref{thm:tropicallyTransverseSystem}.  We briefly comment on how to check whether a system is tropically transverse and how to compute its generic root count in \OSCAR.

\begin{definition}\label{def:tropicallyTransverseSystem}
	We say a system $F=\{f_1,\dots,f_n\}\subseteq K[a][x^\pm]$ is \emph{horizontally parametrised} if there is a partition $[m]=\coprod_{j=1}^n A_j$ and a finite \emph{support} $S=\{s_1,\dots,s_m\}\subseteq K[x^\pm]$ such that
  \[ f_i = \sum_{j\in A_i} a_j\cdot s_j \qquad\text{for all } i=1,\dots,n. \]

  Furthermore, we say that the support $S$ has a \emph{tropically transverse base}, if there is a $B=\{ b_1,\dots,b_r \}\subseteq K[x^\pm]$ such that
  \begin{enumerate}
  \item for each $j\in [m]$ there is a $\beta_j\in\ZZ^r$ such that $s_j=b^{\beta_j}$.
  \item for any $J\subseteq [r]$ we have
    \[ \codim\Big(\bigcap_{j\in J} |\Trop(b_j)|\Big) = \min\Big(\sum_{j\in J} \codim \big(\Trop(b_j)\big),n\Big), \]
    where $\codim(\bigcap_{j\in J} |\Trop(b_j)|)$ is the codimension of any polyhedral complex supported on $\bigcap_{j\in J} |\Trop(b_j)|$.
  \end{enumerate}
  For the sake of brevity, we will refer to a horizontally parametrised polynomial system with tropically transversal base simply as a \emph{tropically transversal system}.
\end{definition}

\begin{example}
	Consider the parametrized system in \cite[Equation 5.1]{BMMT22}:
	\begin{align*}
		f_i & = a_{1, i} u_i (u_i^2 + v_i^2) + a_{2, i} u_i + a_{3, i} v_i + a_{4, i} + \sum_{j \ne i} c_{j, i} v_j &\text{for }i=1,\dots,N, \\
		g_i & = b_{1, i} v_i (u_i^2 + v_i^2) + b_{2, i} u_i + b_{3, i} v_i + b_{4, i} + \sum_{j \ne i} d_{j, i} u_j &\text{for }i=1,\dots,N,
	\end{align*}
  with parameters $a_{j, i}$, $b_{j, i}$, $c_{j, i}$, $d_{j, i}$, and variables $u_i$, $v_i$.  Its support
  \[ S=\{1,u_i,v_i,u_i(u_i^2+v_i^2), v_i(u_i^2+v_i^2)\mid i=1,\dots, N\}, \]
  has a tropically transverse base $B=\{u_i,v_i,u_i^2+v_i^2\mid i=1,\dots,N\}$, as $\Trop(u_i)=\emptyset=\Trop(v_i)$ and the $\Trop(u_i^2+v_i^2)$ intersect transversely as their polynomials have distinct variables.
\end{example}

\begin{example}
	Consider the parametrised system in \cite[Theorem 5.1]{BBPMZ23}:
	\begin{align*}
		a_0 + a_1 u + a_2 v + a_3 u(u^2 + v^2) + \cdots + a_{n + 1} u {(u^2 + v^2)}^{n} & = 0 \\
		b_0 + b_1 u + b_2 v + b_3 v(u^2 + v^2) + \cdots + b_{n + 1} v {(u^2 + v^2)}^{n} & = 0
	\end{align*}
	with parameters $a_i$, $b_i$ and variables $u$, $v$.
  Its support
  \[ S=\{1,u,v,u(u^2 + v^2),\dots,u(u^2 + v^2)^{n},v(u^2 + v^2),\dots,v(u^2 + v^2)^{n}\}, \]
  has a tropically transverse base $B=\{u,v,u^2+v^2\}$.
\end{example}

\begin{remark}
  It is straightforward to check whether a parametrised system $f_i = \sum_{j\in A_i} a_j\cdot s_j$ as in \cref{def:tropicallyTransverseSystem} is tropically transverse.  We can obtain a minimal base $B=\{b_1,\dots,b_r\}$ by computing the prime factors of all $s_j$.
  Testing whether the $\Trop(b_\ell)$ intersect transversely can be done by computing the regular subdivisions of the Newton polytopes $\Delta(b_1),\dots,\Delta(b_r)$ and $\Delta(\prod_{\ell=1}^r b_\ell)=\sum_{\ell=1}^r\Delta(b_\ell)$ induced by the valuation of their coefficients. Note that every cell $\sigma$ of the subdivision of $\Delta(\prod_{\ell=1}^r b_\ell)$ can be written as a Minkowski sum $\sigma_1+\dots+\sigma_r$ where $\sigma_\ell$ is a cell of the subdivisions of $\Delta(b_\ell)$ \cite[Section 4.2]{Joswig21}.
  The $\Trop(b_\ell)$ intersect transversely if and only if $\sum_{\ell=1}^r \dim(\sigma_\ell) = \dim(\sigma)$ for all cells $\sigma$. \cref{fig:tropicalTransversality} illustrates three examples, two of which do not intersect transversely and with the defective dual cell $\sigma$ highlighted.
\end{remark}

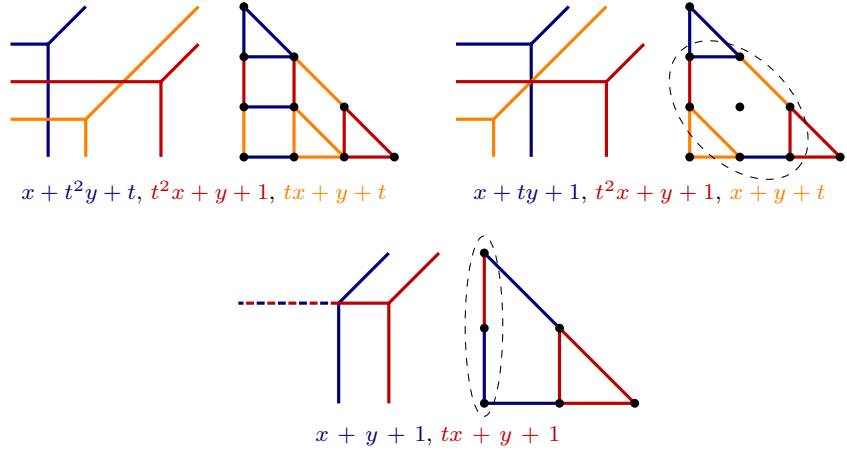
\begin{figure}[h]
  \centering
  \begin{tikzpicture}
    \node[anchor=east,xshift=-3mm] (subfigure1)
    {
      \begin{tikzpicture}
        \node (tropicalPicture)
        {
          \begin{tikzpicture}[scale=0.5]
            \coordinate (blueVertex) at (0,0);
            \coordinate (orangeVertex) at (1,-2);
            \coordinate (redVertex) at (3,-1);
            \draw[blue!50!black,very thick]
            (blueVertex) -- ++(-1,0)
            (blueVertex) -- ++(0,-3)
            (blueVertex) -- ++(1,1);
            \draw[orange,very thick]
            (orangeVertex) -- ++(-2,0)
            (orangeVertex) -- ++(0,-1)
            (orangeVertex) -- ++(3,3);
            \draw[red!75!black,very thick]
            (redVertex) -- ++(-4,0)
            (redVertex) -- ++(0,-2)
            (redVertex) -- ++(1,1);
          \end{tikzpicture}
        };
        \node[anchor=west,xshift=3mm] (dualPicture) at (tropicalPicture.east)
        {
          \begin{tikzpicture}[scale=0.667]
            \draw[blue!50!black,very thick]
            (0,2) -- (1,2) -- (0,3) -- cycle
            (0,0) -- (1,0)
            (0,1) -- (1,1);
            \draw[orange,very thick]
            (1,0) -- (2,0) -- (1,1) -- cycle
            (0,0) -- (0,1)
            (2,1) -- (1,2);
            \draw[red!75!black,very thick]
            (2,0) -- (3,0) -- (2,1) -- cycle
            (0,1) -- (0,2)
            (1,1) -- (1,2);
            \fill
            (0,0) circle (2.5pt)
            (1,0) circle (2.5pt)
            (2,0) circle (2.5pt)
            (3,0) circle (2.5pt)
            (0,1) circle (2.5pt)
            (1,1) circle (2.5pt)
            (2,1) circle (2.5pt)
            (0,2) circle (2.5pt)
            (1,2) circle (2.5pt)
            (0,3) circle (2.5pt);
          \end{tikzpicture}
        };
      \end{tikzpicture}
    };
    \node[anchor=north,yshift=-12mm,text width=50mm,align=center] at (subfigure1)
    { \textcolor{blue!50!black}{$x+t^2y+t$}, \textcolor{red!75!black}{$t^2x+y+1$}, \textcolor{orange}{$tx+y+t$} };
    \node[anchor=west,xshift=0mm] (subfigure2)
    {
      \begin{tikzpicture}
        \node (tropicalPicture)
        {
          \begin{tikzpicture}[scale=0.5]
            \coordinate (blueVertex) at (0,1);
            \coordinate (orangeVertex) at (-1,-1);
            \coordinate (redVertex) at (2,0);
            \draw[blue!50!black,very thick]
            (blueVertex) -- ++(-2,0)
            (blueVertex) -- ++(0,-3)
            (blueVertex) -- ++(1,1);
            \draw[orange,very thick]
            (orangeVertex) -- ++(-1,0)
            (orangeVertex) -- ++(0,-1)
            (orangeVertex) -- ++(3,3);
            \draw[red!75!black,very thick]
            (redVertex) -- ++(-4,0)
            (redVertex) -- ++(0,-2)
            (redVertex) -- ++(1,1);
          \end{tikzpicture}
        };
        \node[anchor=west,xshift=3mm] (dualPicture) at (tropicalPicture.east)
        {
          \begin{tikzpicture}[scale=0.667]
            \draw[blue!50!black,very thick]
            (0,2) -- (1,2) -- (0,3) -- cycle
            (1,0) -- (2,0);
            \draw[orange,very thick]
            (0,0) -- (1,0) -- (0,1) -- cycle
            (2,1) -- (1,2);
            \draw[red!75!black,very thick]
            (2,0) -- (3,0) -- (2,1) -- cycle
            (0,1) -- (0,2);
            \fill
            (0,0) circle (2.5pt)
            (1,0) circle (2.5pt)
            (2,0) circle (2.5pt)
            (3,0) circle (2.5pt)
            (0,1) circle (2.5pt)
            (1,1) circle (2.5pt)
            (2,1) circle (2.5pt)
            (0,2) circle (2.5pt)
            (1,2) circle (2.5pt)
            (0,3) circle (2.5pt);
          \end{tikzpicture}
        };
      \end{tikzpicture}
    };
    \node[anchor=north,yshift=-12mm,text width=50mm,align=center] at (subfigure2)
    { \textcolor{blue!50!black}{$x+ty+1$}, \textcolor{red!75!black}{$t^2x+y+1$}, \textcolor{orange}{$x+y+t$} };
    \node[xshift=40mm,yshift=-3.75mm] (subfigure2Highlight)
    {
      \begin{tikzpicture}
        \draw[dashed,rotate=-45] (0,0) ellipse (1.1cm and 0.7cm);
      \end{tikzpicture}
    };
    \node[anchor=north,yshift=-20mm] (subfigure3)
    {
      \begin{tikzpicture}
        \node (tropicalPicture)
        {
          \begin{tikzpicture}[scale=0.667]
            \coordinate (blueVertex) at (0,0);
            \coordinate (redVertex) at (1,0);
            \draw[blue!50!black,very thick]
            (blueVertex) -- ++(0,-2)
            (blueVertex) -- ++(1,1);
            \draw[red!75!black,very thick]
            (redVertex) -- ++(-1,0)
            (redVertex) -- ++(0,-2)
            (redVertex) -- ++(1,1);
            \draw[very thick,blue!50!black,dash pattern= on 3pt off 5pt,dash phase=4pt]
            (blueVertex) -- ++(-2,0);
            \draw[very thick,red!75!black,dash pattern= on 3pt off 5pt]
            (blueVertex) -- ++(-2,0);
          \end{tikzpicture}
        };
        \node[anchor=west,xshift=3mm] (dualPicture) at (tropicalPicture.east)
        {
          \begin{tikzpicture}
            \draw[blue!50!black,very thick]
            (0,0) -- (1,0)
            (0,0) -- (0,1)
            (0,2) -- (1,1);
            \draw[red!75!black,very thick]
            (1,0) -- (2,0) -- (1,1) -- cycle
            (0,1) -- (0,2);
            \fill
            (0,0) circle (1.667pt)
            (1,0) circle (1.667pt)
            (2,0) circle (1.667pt)
            (0,1) circle (1.667pt)
            (1,1) circle (1.667pt)
            (0,2) circle (1.667pt);
          \end{tikzpicture}
        };
      \end{tikzpicture}
    };
    \node[anchor=north,yshift=-12mm,text width=50mm,align=center] at (subfigure3)
    { \textcolor{blue!50!black}{$x+y+1$}, \textcolor{red!75!black}{$tx+y+1$} };
    \node[xshift=6.125mm,yshift=-32.5mm] (subfigure3Highlight)
    {
      \begin{tikzpicture}
        \draw[dashed] (0,0) ellipse (0.25cm and 1.2cm);
      \end{tikzpicture}
    };
  \end{tikzpicture}\vspace{-3mm}
  \caption{Tropical transversality of hypersurfaces and its dual picture}
  \label{fig:tropicalTransversality}
\end{figure}

\begin{definition}\label{def:tropicallyTransverseModification}
  Let $F=\{f_1,\dots,f_n\}\subseteq K[a][x^\pm]$ be a tropically transverse system as in \cref{def:tropicallyTransverseSystem}.  Its \emph{(tropical) modification} is the parametrised system
  \begin{equation}\label{eq:tropicalTransverseModification}
    \begin{aligned}
      \hat f_i&\coloneqq \sum_{j\in A_i} a_jz_j &&\text{for all }i=1,\dots,n,\\
      \hat g_j&\coloneqq z_j-y^{\beta_j}&&\text{for all }j=1,\dots,m,\\
      \hat h_\ell&\coloneqq y_\ell-b_\ell&&\text{for all }\ell=1,\dots,r.
    \end{aligned}
  \end{equation}
  Here, the $a_j$ remain parameters and the $x_i$, $y_j$, and $z_\ell$ are variables.  We abbreviate the parametrised polynomial ring of the modification by\linebreak $K[a][x^\pm,y^\pm,z^\pm]\coloneqq K[a_1,\dots,a_m][x_1^\pm,\dots,x_n^\pm,y_1^\pm,\dots,y_r^\pm,z_1^\pm,\dots,z_m^\pm]$.  For the sake of consistency all polynomials are regarded as parametrised polynomial equations, even the $g_j$ and $h_\ell$ which do not depend on the parameters. Let $\hat I$ be the ideal generated by the $\hat f_i$, $\hat g_j$ and $\hat h_\ell$.  We denote the ambient space of the tropicalisations by $\RR^{n+r+m}$ with coordinates indexed by the variables.  In particular $e_{x_i}$, $e_{y_\ell}$, $e_{z_j}$ are the unit vectors in $\RR^{n+r+m}$.
\end{definition}

\begin{lemma}\label{lem:transversalIntersection1}
	Let $F$ be a tropically transverse system, and let $\hat h_\ell$ be defined as in~\cref{def:tropicallyTransverseModification}.  Then for all $P\in K^m$
	\[ \Trop\Big(\langle \hat h_{\ell,P} \mid \ell=1,\dots,r \rangle\Big) = \bigwedge_{\ell=1}^r \Trop\big(\hat h_{\ell,P}\big), \]
  where the equality only holds up to refinement of weighted polyhedral complexes.
\end{lemma}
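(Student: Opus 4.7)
The plan is to proceed by induction on $r$, applying \cref{thm:transverseIntersection} at each step. Since $\hat h_\ell = y_\ell - b_\ell$ does not depend on the parameters, we have $\hat h_{\ell,P} = \hat h_\ell$ for every $P \in K^m$, so the choice of $P$ is immaterial. Moreover, $\hat h_1, \ldots, \hat h_r$ is a regular sequence in $K[x^\pm, y^\pm, z^\pm]$: each $\hat h_\ell$ introduces the new variable $y_\ell$ linearly, and $y_\ell$ appears in no other $\hat h_{\ell'}$. Consequently every subideal $\langle \hat h_1, \ldots, \hat h_k\rangle$ is a complete intersection of codimension $k$, which matches the hypothesis of \cref{thm:transverseIntersection}.

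For the inductive step (the base case $r=1$ is immediate), assume $\Trop(\langle \hat h_1, \ldots, \hat h_{r-1}\rangle) = \bigwedge_{\ell=1}^{r-1} \Trop(\hat h_\ell)$.  Provided this tropical variety meets $\Trop(\hat h_r)$ transversely, \cref{thm:transverseIntersection} immediately yields
\[ \Trop(\langle \hat h_1, \ldots, \hat h_r\rangle) = \Trop(\langle \hat h_1, \ldots, \hat h_{r-1}\rangle) \stsect \Trop(\hat h_r) = \bigwedge_{\ell=1}^r \Trop(\hat h_\ell), \]
closing the induction.  It therefore remains to verify transversality.

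The tropical polynomial of $\hat h_\ell$ is $y_\ell \oplus \trop(b_\ell)(x)$, so $\Trop(\hat h_\ell) \subset \RR^{n+r+m}$ decomposes into a \emph{graph} stratum where $y_\ell = \trop(b_\ell)(x)$ and a \emph{shadow} stratum where $y_\ell > \trop(b_\ell)(x)$ and $x \in \Trop(b_\ell)$; in both strata it is extended freely along the remaining $y_{\ell'}$ and $z_j$ coordinates. At a generic point $p$ of $\bigcap_\ell \Trop(\hat h_\ell)$, let $J \subseteq [r]$ index those $\ell$ for which $p$ lies in the shadow stratum of $\Trop(\hat h_\ell)$. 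Each hypersurface contributes a single normal vector at $p$: for $\ell \notin J$ a vector of the form $\partial y_\ell - \nabla_x \trop(b_\ell)$ carrying a distinguished nonzero $\partial y_\ell$-component, and for $\ell \in J$ a vector purely in the $x$-directions normal to a maximal cell of $\Trop(b_\ell)$. Reading off the distinct $\partial y_\ell$-components from any vanishing linear combination forces all graph-case coefficients to be zero, so linear independence of the $r$ normals reduces to linear independence of the shadow-case $x$-normals. This is precisely what condition~(2) of \cref{def:tropicallyTransverseSystem} guarantees, since tropical transversality of $\{b_\ell : \ell \in J\}$ in $\RR^n$ both forces $|J| \leq n$ at top-dimensional intersection points and makes the corresponding shadow normals linearly independent.

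The main technical obstacle will be handling lower-dimensional strata where $p$ lies on the boundary between graph and shadow parts, at which the case split above is ambiguous. This is resolved by invoking the perturbation characterisation of stable intersection \cite[Proposition~3.6.12]{MS2015}: after a generic translation $v \to 0$ of $\Trop(\hat h_r)$, every intersection point falls unambiguously into one of the two strata for each index, and the linear independence argument above then applies uniformly to the entire stable intersection.
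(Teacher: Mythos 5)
Your proof follows essentially the same route as the paper's: reduce to transversality of the hypersurfaces $\Trop(\hat h_\ell)$ via \cref{thm:transverseIntersection}, then establish transversality by classifying the normal vectors of the maximal cells into those with a distinguished $e_{y_\ell}$-component (your graph stratum, the paper's ``Type Y'') and those supported purely in the $x$-directions (your shadow stratum, the paper's ``Type X''), and read off linear independence from the resulting block-triangular structure with condition~(2) of \cref{def:tropicallyTransverseSystem} supplying full rank of the $x$-block. Your explicit induction and the regular-sequence observation make the appeal to \cref{thm:transverseIntersection} (which is stated for pairs of complete intersections) more careful than the paper's compressed phrasing, and that is a genuine improvement in rigour.

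The one place you overcomplicate matters is the final paragraph on boundary strata. Transversality is a statement about every maximal cell containing the intersection point, so the correct and simpler fix is the paper's: pick \emph{any} maximal cell $\sigma_\ell$ of $\Trop(\hat h_\ell)$ containing $w$ (it may be of either type at a boundary point), and observe that the block-triangular normal-vector argument applies verbatim regardless of which type each $\sigma_\ell$ happens to be. Invoking the perturbation characterisation of stable intersection is not the right tool here: it tells you about the perturbed configuration, not the unperturbed one, and reasoning about the former does not directly certify transversality of the latter, which is the hypothesis you must verify before \cref{thm:transverseIntersection} is even applicable. Dropping that last paragraph and replacing it with the ``arbitrary maximal cell'' observation brings your proof fully in line with the paper's.
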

\begin{proof}
	By \cref{thm:transverseIntersection}, it suffices to show that the $\Trop(\hat h_{\ell,P})$ intersect transversely.
	To prove this, consider an intersection point $w\in \bigcap_{\ell=1}^r|\Trop(\hat h_{\ell,P})|$ and let $\sigma_\ell\in\Trop(\hat h_{\ell,P})$ be a maximal polyhedron containing $w$.
	Note that the hypersurfaces intersect transversally around $w$ if and only if their stars around $w$ intersect transverally around $0$.
  We may therefore assume without loss of generality that $w=0$ and that all $\sigma_\ell$ are cones.

	Letting $(\cdot)^\perp$ denote the orthogonal complement of a vector, we then have
	\begin{equation*}
		\spanoperator(\sigma_\ell)=
		\begin{cases}
			(e_{y_\ell}-\sum_{i=1}^n \alpha_i\cdot e_{x_i})^\perp   & \text{for a monomial } x^\alpha \text{ in } b_\ell \text{ (Type Y), or}            \\
			(\sum_{i=1}^n (\beta_i-\alpha_i)\cdot e_{x_i})^\perp & \text{for two monomials } x^\alpha, x^\beta \text{ in } b_\ell \text{ (Type X).}
		\end{cases}
	\end{equation*}
	Arranging all normal vectors above as rows of a matrix, with all $\sigma_\ell$ of Type Y on top and all $\sigma_\ell$ of Type X on the bottom, with all $e_{y_\ell}$ on the left and all $e_{x_i}$ on the right, yields a matrix of the form
	\begin{equation*}
		\left(
		\begin{array}{c|c}
			I & \ast \\ \hline
			0 & B
		\end{array}
		\right)
	\end{equation*}
	where $I$ is a full-rank 0/1-submatrix in row-echelon form, and $B$ is another full-rank submatrix due to $\Trop(b_\ell)$ intersecting transversally.  Hence the matrix is itself of full rank, and the $\sigma_\ell$ intersect each other transversally.
\end{proof}

\begin{lemma}\label{lem:transversalIntersection2}
  Let $F$ be a tropically transverse system, and let $\hat g_j$, $\hat h_\ell$ be defined as in~\cref{def:tropicallyTransverseModification}. Then for all $P\in K^m$
  \begin{align*}
    &\Trop\Big(\langle \hat g_{j,P}, \hat h_{\ell,P} \mid j=1,\dots,m, \ell=1,\dots,r \rangle\Big)\\
    &\hspace{12mm}= \bigwedge_{j=1}^m \Trop\big(\hat g_{j,P}\big) \wedge \bigwedge_{\ell=1}^r \Trop\big(\hat h_{\ell,P}\big),
  \end{align*}
  where the equality only holds up to refinement of weighted polyhedral complexes.
\end{lemma}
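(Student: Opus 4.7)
The plan is to mimic the matrix-rank strategy of Lemma~\ref{lem:transversalIntersection1} one level further up the modification. By Theorem~\ref{thm:transverseIntersection}, it suffices to show that the hypersurfaces $\Trop(\hat g_{j,P})$ together with the $\Trop(\hat h_{\ell,P})$ intersect transversely at every common intersection point $w$. As before, we may pass to the stars at $w$, reducing to the case where $w=0$ and all chosen maximal cells $\sigma_j\in\Trop(\hat g_{j,P})$ and $\sigma_\ell\in\Trop(\hat h_{\ell,P})$ are cones.

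Next, I would compute the spans. Since each $\hat g_j = z_j - y^{\beta_j}$ is a binomial, its tropical hypersurface is a single classical hyperplane, so every maximal cell has span
\[ \spanoperator(\sigma_j) = \Big(e_{z_j} - \sum_{\ell=1}^r (\beta_j)_\ell\, e_{y_\ell}\Big)^\perp. \]
The spans of the maximal cones $\sigma_\ell$ have already been classified in the proof of Lemma~\ref{lem:transversalIntersection1} into Type Y and Type X, and the key observation is that none of them involves any $z_j$ coordinate.

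I would then arrange all normal vectors as rows of a matrix, with columns ordered $(z_1,\dots,z_m \mid y_1,\dots,y_r \mid x_1,\dots,x_n)$ and rows ordered with the $\hat g_j$-normals on top, followed by the Type Y $\hat h$-rows and then the Type X $\hat h$-rows. This yields a block upper triangular matrix
\[ \left(\begin{array}{c|c|c} I_m & \ast & 0 \\ \hline 0 & I & \ast \\ \hline 0 & 0 & B \end{array}\right), \]
whose top-left $I_m$ is contributed by the distinct $z_j$-variables of the $\hat g_j$, and whose lower-right $2\times 2$ block is precisely the full-rank matrix produced in Lemma~\ref{lem:transversalIntersection1}. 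Each diagonal block has full row rank, so the whole matrix does too, establishing that the hypersurfaces intersect transversely.

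The main obstacle is really only the bookkeeping needed to see that the $\hat g_j$-rows occupy a clean block in the $z$-columns with zeros in the $x$-columns, so that the existing transversality of the $\Trop(\hat h_{\ell,P})$ is preserved and simply augmented by an identity block. Because $\hat g_j$ depends only on $z_j$ and on the $y$-variables, this block structure is immediate, and Theorem~\ref{thm:transverseIntersection} then promotes the transversality up to the asserted equality of weighted polyhedral complexes.
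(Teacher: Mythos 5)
Your proof is correct and follows essentially the same strategy as the paper: invoke \cref{thm:transverseIntersection} to reduce to checking transversality of the hypersurfaces, pass to stars so that all cells become cones through the origin, and then show full rank of the matrix of normal vectors. The paper's matrix is a $2\times 2$ block form $\left(\begin{smallmatrix} I & \ast \\ 0 & B\end{smallmatrix}\right)$ with the $m\times m$ identity on top-left (from the distinct $z_j$ in the $\hat g_j$) and $B$ the full-rank matrix from \cref{lem:transversalIntersection1}; your version merely unpacks $B$ into its own $2\times 2$ block structure from \cref{lem:transversalIntersection1}, yielding a $3\times 3$ block-triangular matrix. This is the same argument presented at a finer granularity, and your extra observation that the $\hat g_j$-rows have zero $x$-components (since $\hat g_j$ depends only on $z_j$ and $y$) is implicitly used in the paper's proof as well.
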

\begin{proof}
  We use a similar proof to the one of Lemma \ref{lem:transversalIntersection1}. By \cref{thm:transverseIntersection}, it is enough to show that the hypersurfaces in the above equation intersect transversally. Let $\tau_j\in \Trop(\hat g_{j,P})$, $\sigma_\ell\in \Trop(\hat h_{\ell,P})$ be maximal polyhedra intersecting in a point $w$. Again, we may assume without loss of generality that $w = 0$ and that the $\tau_j$ and $\sigma_\ell$ are polyhedral cones.

  Let $v_j$ and $u_\ell$ be the normal vectors on the linear span of $\tau_j$ and $\sigma_\ell$, respectively. Arranging all normal vectors as a rows of a matrix, with all $v_j$ on top and all $u_\ell$ on the bottom, with all $e_{z_j}$ on the left and all $e_{y_\ell}, e_{x_i}$ on the right, yields a matrix of the form
	\begin{equation*}
		\left(
		\begin{array}{c|c}
			I & \ast \\ \hline
			0 & B
		\end{array}
		\right)
	\end{equation*}
  where the submatrix $I$ is the $m\times m$ identity matrix and the submatrix $B$ is of full rank by~\cref{lem:transversalIntersection1}.  Hence the matrix is of full rank, and the $\tau_j$, $\sigma_\ell$ intersect each other transversely.
\end{proof}

\begin{theorem}\label{thm:tropicallyTransverseSystem}
	Let $F$ be a tropically transverse system, $I$ the parametrised ideal it generates, and let $\hat I$, $\hat f_i$, $\hat g_j$, $\hat h_\ell$ be as defined as in \cref{def:tropicallyTransverseModification}. Then the generic root count equals
  \begin{align*}
    \ell_{I} &= \Big(\prod_{i=1}^n \Trop(\hat f_{i,P})\Big) \cdot \Big(\prod_{j=1}^m \Trop(\hat g_{j,P})\Big) \cdot \Big(\prod_{\ell=1}^r \Trop(\hat h_{\ell,P})\Big)\\
    &= \mathrm{MV}\Big(\Delta(\hat f_{i,P}),\Delta(\hat g_{j,P}),\Delta(\hat h_{\ell,P})\mid i\in [n], j\in [m], \ell\in[r] \Big)
  \end{align*}
  for $P\in K^m$ generic, where $\Delta(\cdot)$ denotes the Newton polytope, and $\mathrm{MV}(\cdot)$ denotes the normalized mixed volume.
\end{theorem}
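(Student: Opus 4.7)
The plan is to pass from $I$ to the modified ideal $\hat I$, apply Proposition~\ref{prop:HelminckRen} to express its generic root count as a tropical intersection product, and then invoke Lemma~\ref{lem:transversalIntersection2} together with the tropical analogue of Bernstein's theorem to rewrite this product as a mixed volume.

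First I would show $\ell_I = \ell_{\hat I}$ by exhibiting a bijection $V(\hat I_P) \to V(I_P)$ for generic $P \in K^m$. The equations $\hat h_\ell = y_\ell - b_\ell$ and $\hat g_j = z_j - y^{\beta_j}$ force $y_\ell = b_\ell(x)$ and $z_j = s_j(x)$, making the projection $(x,y,z) \mapsto x$ injective on $V(\hat I_P)$; substituting back into $\hat f_i$ recovers $f_i$, so the image is the subset of $V(I_P)$ on which every $b_\ell$ is non-vanishing. Since the $b_\ell$ are independent of the parameters $a$, a standard genericity argument shows that for generic $P$ no solution of $I_P$ meets any $\{b_\ell = 0\}$, so the projection is a multiplicity-preserving bijection.

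Next I would regard $\hat I$ as the sum of $n+1$ subideals: the principal ideals $\langle \hat f_1\rangle, \ldots, \langle \hat f_n\rangle$, and $I_0 \coloneqq \langle \hat g_j, \hat h_\ell \mid j \in [m], \ell \in [r]\rangle$. Each $\langle \hat f_i\rangle$ is torus equivariant via the action $z_j \mapsto u_j z_j$ on the variables paired with $a_j \mapsto u_j^{-1} a_j$ on the parameters. Parametric independence follows by augmenting the partition $[m] = \coprod_{i=1}^n A_i$ with an empty block for the parameter-free $I_0$, and the codimensions sum to $n \cdot 1 + (m+r) = n+m+r$, matching the ambient dimension. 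Proposition~\ref{prop:HelminckRen} then yields, for generic $P$,
\[ \ell_{\hat I} = \Trop(I_{0,P}) \cdot \prod_{i=1}^n \Trop(\hat f_{i,P}). \]

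Finally, by Lemma~\ref{lem:transversalIntersection2} the factor $\Trop(I_{0,P})$ agrees up to refinement with $\bigwedge_{j=1}^m \Trop(\hat g_{j,P}) \wedge \bigwedge_{\ell=1}^r \Trop(\hat h_{\ell,P})$, so by associativity of stable intersection the intersection number decomposes into contributions from all $n+m+r$ individual hypersurfaces, establishing the first equality of the theorem. The second equality is the tropical Bernstein-Kushnirenko theorem: the intersection number of $d$ tropical hypersurfaces in $\RR^d$ equals the normalized mixed volume of their Newton polytopes. The main technical obstacle is the reduction $\ell_I = \ell_{\hat I}$, namely the argument that generic solutions of $I_P$ avoid the loci $\{b_\ell = 0\}$; the remaining verifications---torus equivariance, parametric independence, and generic complementary dimension---are routine once the torus action and partition are set up.
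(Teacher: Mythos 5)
Your proposal is correct and follows essentially the same route as the paper's proof: pass to the modified ideal $\hat I$, apply Proposition~\ref{prop:HelminckRen}, invoke Lemma~\ref{lem:transversalIntersection2}, and finish with the tropical Bernstein theorem. The one genuine difference is that you apply Proposition~\ref{prop:HelminckRen} with the finer decomposition $\hat I = \langle\hat f_1\rangle + \dots + \langle\hat f_n\rangle + I_0$ into $n+1$ subideals, whereas the paper uses only the two blocks $\langle\hat f_i\mid i\rangle$ and $\langle\hat g_j,\hat h_\ell\rangle$. Your refinement buys you something: it delivers the factors $\Trop(\hat f_{i,P})$ directly from the proposition, so you never need to argue separately (as the paper does) that the hypersurfaces $\Trop(\hat f_{i,P})$ intersect transversally; that work is absorbed into the parametric-independence hypothesis of Proposition~\ref{prop:HelminckRen}. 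You also give a more honest account of the step $\ell_I = \ell_{\hat I}$ than the paper, which merely asserts it; your observation that one must rule out generic solutions of $I_P$ landing on $\{b_\ell = 0\}$ (and, one should add, on $\{s_j = 0\}$) is exactly the point that requires care, and the reasoning that these loci are parameter-free and hence avoided for generic $P$ is the right one. Just make sure in a fully written-out version to treat this lift to the torus $(K^*)^{n+r+m}$ carefully, and to verify that the torus action you pair with each $\langle\hat f_i\rangle$ is an action of the full ambient torus $(K^*)^{n+r+m}$ (acting trivially on coordinates outside $\{z_j : j\in A_i\}$), as the definition of torus equivariance requires.
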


\begin{proof}
  First note that $I$ and $\hat I$ have the same generic root count.  As $\langle \hat f_i\mid i=1,\dots,n\rangle$ is torus equivariant and parametrically independent to $\langle \hat g_j, \hat h_\ell \mid j=1,\dots,m,\ell=1,\dots,r \rangle$, we have by~\cref{prop:HelminckRen}:
  \begin{align*}
    \ell_{I} &= \Trop(\langle \hat f_{i,P}\mid i=1,\dots,n \rangle) \\
             &\hspace{7mm}\cdot \Trop(\langle \hat g_{j,P}, \hat h_{\ell,P} \mid j=1,\dots,m,\ell=1,\dots,r \rangle).
  \end{align*}
  Next, the $\Trop(\hat g_{j,P})$ and $\Trop(\hat h_{\ell,P})$ intersect each other transversely for all $P\in K^m$ by~\cref{lem:transversalIntersection2}, hence
  \begin{align*}
    \ell_{I} &= \Trop(\langle \hat f_{i,P}\mid i=1,\dots,n \rangle) \cdot \Trop(\langle \hat g_{j,P} \mid j=1,\dots,m\rangle)\\
             &\hspace{7mm}\cdot \Trop(\langle\hat h_{\ell,P} \mid \ell=1,\dots,r \rangle).
  \end{align*}
  The equality with the tropical intersection product then follows from the fact that $\Trop(\hat f_{i,P})$ intersect each other transversally, and the same holds for the $\Trop(\hat g_{j,P})$ and the $\Trop(\hat h_{\ell,P})$. The equality with the normalized mixed volume is \cite[Theorem 4.6.8]{MS2015}.
\end{proof}

\begin{example}\label{ex:code}
  Code for computing the tropical intersection product of the modification in \cref{thm:tropicallyTransverseSystem} can be found in
  \begin{center}
    \url{https://github.com/isaacholt100/generic_root_count}
  \end{center}
  The code currently relies on \texttt{MixedSubdivision.jl} for the computation of the tropical intersection product, it contains the functions
  \begin{itemize}
  \item \texttt{nonlinear\_resonator\_system(n::Int,m::Int)} for constructing the System~\ref{eq:theSystem}
    \item \texttt{generic\_root\_count(F::Vector\{<:MPolyElem\})} for computing the tropical intersection number (or mixed volume) in \cref{thm:tropicallyTransverseSystem}.
  \end{itemize}
   The latter requires a horizontally parametrised system as a vector of polynomials with coefficients in another polynomial ring:

  \inputminted{jlcon}{grc.jlcon}
\end{example}

\section{Coupled Nonlinear Oscillators}\label{sec:mainSystem}

This section is dedicated to an alternate proof of the generic number of equilibria of coupled nonlinear oscillators \cite[Theorem 5.1]{BBPMZ23} using elementary tropical geometry. Namely:
\begin{theorem}\label{thm:mainSystem}
  The following parametrised system of polynomial equations has generically $2mn+1$ solutions:
  \begin{equation}
    \label{eq:theSystem}
    \begin{aligned}
      0&= a_0 + a_1x_1 + a_2x_2 + a_3x_1(x_1^m+x_2^m)+\dots+a_{n+2}x_1(x_1^m+x_2^m)^n\\
      0&= b_0 + b_1x_2 + b_2x_2 + b_3x_2(x_1^m+x_2^m)+\dots+b_{n+2}x_2(x_1^m+x_2^m)^n
    \end{aligned}
  \end{equation}
  Here, the $a_i$ and $b_i$ are the parameters, while $x_1$ and $x_2$ are the variables.
\end{theorem}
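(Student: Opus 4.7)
The plan is to apply \cref{thm:tropicallyTransverseSystem} directly. First, I recognise that system~\eqref{eq:theSystem} is horizontally parametrised with support
\[ S = \{1, x_1, x_2\} \cup \bigl\{ x_1 (x_1^m+x_2^m)^k,\ x_2 (x_1^m + x_2^m)^k \bigm| 1 \le k \le n \bigr\}, \]
where the common monomials $1, x_1, x_2$ appear with independent parameters in the two equations. Its support admits the tropically transverse base $B = \{x_1, x_2, x_1^m + x_2^m\}$: indeed, $\Trop(x_1) = \Trop(x_2) = \emptyset$, so the codimension condition in \cref{def:tropicallyTransverseSystem} reduces to the triviality $\codim(|\Trop(x_1^m + x_2^m)|) = 1$ in dimension $2$.

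Next, by \cref{thm:tropicallyTransverseSystem}, the generic root count equals the mixed volume of the Newton polytopes of the modified system. The binomial modifications $\hat h_1 = y_1 - x_1$, $\hat h_2 = y_2 - x_2$ and $\hat g_j = z_j - y^{\beta_j}$ impose rigid linear constraints that effectively eliminate the tropical coordinates of $y_1, y_2$ and all the $z_j$ in favour of those of $(x_1, x_2, y_3)$. The tropical intersection number thereby reduces to the stable intersection of three tropical hypersurfaces in $\RR^3$ with coordinates $(u, v, w) = (\val(x_1), \val(x_2), \val(y_3))$, arising from
\begin{align*}
p_1 &= a_0 + a_1 x_1 + a_2 x_2 + \sum_{k=1}^n a_{k+2}\, x_1 y_3^k,\\
p_2 &= b_0 + b_1 x_1 + b_2 x_2 + \sum_{k=1}^n b_{k+2}\, x_2 y_3^k,\\
p_3 &= y_3 - (x_1^m + x_2^m).
\end{align*}
By Bernstein--Khovanskii--Kushnirenko, this equals the mixed volume $\mathrm{MV}(N(p_1), N(p_2), N(p_3))$ of the Newton polytopes
$N(p_1) = \conv\{0, e_1, e_2, e_1 + n e_3\}$, $N(p_2) = \conv\{0, e_1, e_2, e_2 + n e_3\}$ (two tetrahedra) and $N(p_3) = \conv\{e_3, m e_1, m e_2\}$ (a triangle).

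Finally, I would verify $\mathrm{MV}(N(p_1), N(p_2), N(p_3)) = 2mn + 1$ by analysing the stable intersection directly. The tropical hypersurface $\Trop(p_3)$ decomposes into three half-planes meeting along the common line $\{u = v,\ w = mu\}$: the two lateral half-planes $\{w = mu,\ u \le v\}$ and $\{w = mv,\ u \ge v\}$ carry multiplicity $1$, while the central half-plane $\{u = v,\ w \ge mu\}$ carries multiplicity $m$ (coming from the lattice length $m$ of the edge $\conv\{m e_1, m e_2\}$ of $N(p_3)$). Intersecting each with the tropical curve $\Trop(p_1) \stsect \Trop(p_2)$, one expects $mn$ intersection points on each lateral half-plane and one point on the central one, for a total of $2mn + 1$.

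The main obstacle is carrying out this enumeration with the correct multiplicities: tracking both the edge multiplicities on $\Trop(p_3)$ and the lattice index factors $[N : N_{\tau_1} + N_{\tau_2}]$ from \cref{def:stableIntersection} at each intersection point. A purely computational alternative is to expand $\mathrm{MV}$ via the inclusion--exclusion formula over the seven Minkowski sums of $N(p_1), N(p_2), N(p_3)$, which is tedious but entirely elementary.
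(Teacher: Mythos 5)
The approach you outline is structurally parallel to the paper's: both use the modification from \cref{def:tropicallyTransverseModification} and reduce the problem to a tropical intersection number, and both ultimately identify exactly three intersection points with multiplicities $mn$, $mn$, and $1$. Where you differ is that you fold the binomial modification down to a small system in $\RR^3$ and appeal to BKK, while the paper works directly in $\RR^{2n+3}$ and carries out the stable-intersection enumeration cell by cell (\cref{lem:SigmaNlin} through \cref{lem:stableIntersectionSigma2}). Your reduction to $\RR^3$ is a legitimate and attractive simplification --- eliminating variables along transverse binomial hypersurfaces is exactly what \cref{lem:transversalIntersection2} licenses --- and it makes the final mixed-volume picture much easier to visualise.

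However, as written your proposal has a genuine gap, and you flag it yourself in the last paragraph: you never actually prove that the stable intersection of $\Trop(p_1) \stsect \Trop(p_2)$ with $\Trop(p_3)$ consists of $mn$ points on each lateral half-plane and one on the central one, nor that their multiplicities sum to $2mn+1$. The phrase ``one expects'' is doing all the work. This is precisely the hard part: one must fix a perturbation $v$, enumerate which cells of the tropical linear space $\Trop(p_1) \stsect \Trop(p_2)$ (which has many maximal cones, indexed by pairs of support monomials as in \cref{def:SigmaLinCells}) actually meet which cells of $\Trop(p_3) + v$, rule out all but three intersections, and compute the lattice indices $[N : N_{\tau_1} + N_{\tau_2}]$ at each. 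The paper's Lemmas \ref{lem:stableIntersectionSigma0}--\ref{lem:stableIntersectionSigma2} and the index computation at the end of the proof of \cref{thm:mainSystem} are exactly this enumeration, and it occupies several pages of careful case analysis. Until you carry out that enumeration (in your smaller $\RR^3$ model it should be somewhat shorter, but it still requires checking every cone of the tropical curve against every cell of $\Trop(p_3)$, with a fixed generic perturbation), or perform the inclusion--exclusion mixed-volume computation you mention as an alternative, the proof is incomplete.

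A second, smaller point: the reduction to $\RR^3$ needs a sentence of justification. The modification in \cref{def:tropicallyTransverseModification} lives in $\RR^{n+r+m}$, and the hypersurfaces $\hat g_j$, $\hat h_\ell$ are binomial and hence classical linear subspaces tropically; intersecting with them does project the problem to a lower-dimensional slice, but you should state that this projection is unimodular (so it preserves lattice indices and hence multiplicities) and that the image of $\Trop(\hat f_{1,P}) \stsect \Trop(\hat f_{2,P})$ under this projection is indeed $\Trop(p_1) \stsect \Trop(p_2)$. This is true and not hard, but it is not automatic.
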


For that, we consider the following simplified modification of System~\eqref{eq:theSystem}:

\begin{definition}\label{def:modification}
  Throughout the section, let:
  \begin{equation}
    \label{eq:modifiedSystem}
    \begin{aligned}
      f_1&\coloneqq a_0 + a_1x_1 + a_2x_2 + a_3y_1+\dots+a_{n+2}y_n,\\
      f_2&\coloneqq b_0 + b_1x_1 + b_2x_2 + b_3z_1+\dots+b_{n+2}z_n,\\
      g &\coloneqq w - (x_1^m+x_2^m), \\
      p_i&\coloneqq y_i- x_1w^i \qquad\text{for }i=1,\dots,n,\\
      q_i&\coloneqq z_i- x_2w^i \qquad\text{for }i=1,\dots,n,
    \end{aligned}
  \end{equation}
  in the parametrised polynomial ring $K[a_i,b_i\mid i=0,\dots,n+2][x_j^\pm,w^\pm,y_i^\pm,z_i^\pm\mid j=1,2, i=1,\dots,n]$ with parameters $a_i, b_i$ and variables $x_j,w,y_i,z_i$.

  We will denote the ambient space of its fibrewise tropicalisations by $\RR^{2n+3}$ and index its coordinates by the variables above, e.g., we will use $e_{x_j}$, $e_w$, $e_{y_i}$, and $e_{z_i}$ to denote the unit vectors in $\RR^{2n+3}$.
\end{definition}

To prove \cref{thm:mainSystem}, we explicitly compute the tropical intersection number
\[ \underbrace{\Trop(\langle f_1,f_2\rangle)}_{\eqqcolon\Sigma_\lin} \cdot \underbrace{\Trop(\langle g,p_i,q_i\rangle)}_{\eqqcolon\Sigma_\nlin}. \]
This is done in multiple steps:
\begin{description}
\item[\normalfont\cref{lem:SigmaNlin}] Describes the three maximal cells of $\Sigma_\nlin$: $\sigma_0$, $\sigma_1$, and $\sigma_2$.
\item[\normalfont\cref{lem:SigmaLin}] Describes the maximal cells of $\Sigma_\lin$: $\tau_{i,j;k,l}$.
\item[\normalfont\cref{lem:stableIntersectionSigma0}] Shows that $\Sigma_\lin\cap (\sigma_0+u)=\emptyset$ for a fixed perturbation $u$.
\item[\normalfont\cref{lem:stableIntersectionSigma1}] Shows that $\Sigma_\lin\cap (\sigma_1+u)=\emptyset$.
\item[\normalfont\cref{lem:stableIntersectionSigma2}] Describes which three $\tau_{i,j;k,l}\in\Sigma_\lin$ intersect $\sigma_2+u$.
\end{description}
The section ends with the proof of \cref{thm:mainSystem} by computing the intersection multiplicities of the three intersection points from \cref{lem:stableIntersectionSigma2}.

\begin{lemma}\label{lem:SigmaNlin}
  Let $g,p_i,q_i$ be as defined in \cref{def:modification}. Then
  \[ \Sigma_\nlin\coloneqq \Trop\Big(\langle g, p_i, q_i\mid i=1,\dots,n\rangle\Big) \]
  up to refinement consists of the following three maximal cells:
  \begin{align*}
    \sigma_0 &\coloneqq \RR_{\geq 0}\cdot \Big(e_w + \sum_{i = 1}^n i(e_{y_i} + e_{z_i})\Big) + \RR \cdot v_\nlin, \\
    \sigma_1 &\coloneqq \RR_{\geq 0}\cdot \Big(e_{x_2} + \sum_{i = 1}^n e_{z_i}\Big) + \RR \cdot v_\nlin, \\
    \sigma_2 &\coloneqq \RR_{\geq 0}\cdot \Big(e_{x_1} + \sum_{i = 1}^n e_{y_i}\Big) + \RR \cdot v_\nlin\eqqcolon\sigma_\nlin,
  \end{align*}
  where $v_\nlin\coloneqq e_{x_1}+e_{x_2}+m\cdot e_w + \sum_{i=1}^n (mi+1)\cdot (e_{y_i}+e_{z_i})$ is the generator of the lineality space of $\Sigma_\nlin$. The multiplicities of $\sigma_1$ and $\sigma_2$ are $1$.
\end{lemma}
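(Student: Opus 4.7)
The plan is to exploit the fact that the binomials $p_i$ and $q_i$ allow us to eliminate the variables $y_i$ and $z_i$, reducing the problem to computing a single tropical hypersurface in $\RR^3$ and pushing it forward. Via the monomial substitution $y_i\mapsto x_1 w^i$ and $z_i\mapsto x_2 w^i$, there is a ring isomorphism
\[ K[x_1^\pm,x_2^\pm,w^\pm,y_i^\pm,z_i^\pm]\big/\langle p_i,q_i\mid i=1,\dots,n\rangle \;\cong\; K[x_1^\pm,x_2^\pm,w^\pm], \]
under which $\langle g,p_i,q_i\rangle$ becomes the principal ideal $\langle g\rangle=\langle w-x_1^m-x_2^m\rangle$. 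Geometrically, this identifies $V(\langle g,p_i,q_i\rangle)$ with $V(g)\subseteq(K^\ast)^3$ embedded torically, so by the standard compatibility of tropicalisation with closed toric embeddings, $\Sigma_\nlin$ is the pushforward of $\Trop(\langle g\rangle)\subseteq\RR^3$ under the induced map on cocharacter spaces
\[ \phi\colon \RR^3\to\RR^{2n+3},\quad(a,b,c)\mapsto(a,\,b,\,c,\,a+c,\dots,a+nc,\,b+c,\dots,b+nc). \]

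Next, I would compute $\Trop(\langle g\rangle)\subseteq\RR^3$ directly in the coordinates $(x_1,x_2,w)$. Since the valuations of the coefficients of $g$ vanish, the Newton polytope is the triangle with vertices $(0,0,1),(m,0,0),(0,m,0)$ with trivial induced subdivision. Hence $\Trop(\langle g\rangle)$ has three maximal cones dual to the three edges, all sharing the lineality line $\RR\cdot(1,1,m)$: the cone $\{x_1=x_2,\,w\geq mx_1\}$ dual to $[(m,0,0),(0,m,0)]$ with extreme ray $(0,0,1)$ and multiplicity $m$ (its lattice length); the cone $\{w=mx_1,\,x_1\leq x_2\}$ dual to $[(0,0,1),(m,0,0)]$ with extreme ray $(0,1,0)$ and multiplicity $1$; and the cone $\{w=mx_2,\,x_2\leq x_1\}$ dual to $[(0,0,1),(0,m,0)]$ with extreme ray $(1,0,0)$ and multiplicity $1$. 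A direct evaluation then shows that $\phi$ sends these three rays to $e_w+\sum_{i=1}^n i(e_{y_i}+e_{z_i})$, $e_{x_2}+\sum_{i=1}^n e_{z_i}$ and $e_{x_1}+\sum_{i=1}^n e_{y_i}$ respectively, and the lineality generator $(1,1,m)$ to $v_\nlin$, recovering $\sigma_0,\sigma_1,\sigma_2$ as in the statement.

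To pin down the multiplicities on the image, I would argue that $\phi$ preserves them because $\phi(\ZZ^3)$ is saturated in $\ZZ^{2n+3}$. Indeed, $\phi(e_1),\phi(e_2),\phi(e_3)$ contain $e_{x_1},e_{x_2},e_w$ as their only nontrivial entries in the $x_1,x_2,w$ coordinates, so the quotient $\ZZ^{2n+3}/\phi(\ZZ^3)$ is freely generated by the classes of $e_{y_i},e_{z_i}$, hence torsion-free. For a maximal cone $\sigma$ of $\Trop(\langle g\rangle)$, this saturation then forces $\ZZ^{2n+3}\cap\phi(\spanoperator\sigma)=\phi(\ZZ^3\cap\spanoperator\sigma)$, so the lattice index from pushforward is $1$ and $\mult(\sigma_1)=\mult(\sigma_2)=1$ as claimed.

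The main obstacle, I expect, lies less in the individual computations than in justifying rigorously the pushforward identity $\Sigma_\nlin=\phi_\ast\Trop(\langle g\rangle)$ together with its multiplicity transformation. While standard, this requires careful bookkeeping of the cocharacter lattices and an invocation of the projection formula for tropicalisations under closed embeddings of tori, which is what underlies the coincidence of the combinatorial picture in $\RR^3$ with the embedded complex in $\RR^{2n+3}$.
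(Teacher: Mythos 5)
Your proof is correct and takes a genuinely different route from the paper's. The paper invokes the transverse-intersection theorem (Osserman--Payne) to realize $\Sigma_\nlin$ as the stable intersection $\Trop(g)\wedge\bigwedge_{i=1}^n\big(\Trop(p_i)\wedge\Trop(q_i)\big)$ of $2n+1$ tropical hypersurfaces, after observing that these intersect transversally, and then reads off the three maximal cells, the lineality generator, and the multiplicities by inspection. You instead use the binomials $p_i,q_i$ to eliminate $y_i,z_i$, identifying $V(\langle g,p_i,q_i\rangle)\subseteq(K^\ast)^{2n+3}$ with the hypersurface $V(g)$ inside the subtorus cut out by the $p_i,q_i$, compute $\Trop(g)\subseteq\RR^3$ from the (trivially subdivided) Newton triangle, and push forward along the induced cocharacter map $\phi$. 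What your route buys is that the cell structure, the lineality direction $(1,1,m)$, and the edge lattice lengths are entirely visible in three dimensions, and the single observation that $\phi(\ZZ^3)$ is saturated in $\ZZ^{2n+3}$ does double duty: it shows the monomial map is a closed embedding of tori and that the pushforward preserves multiplicities. What the paper's route buys is that it stays within stable intersections, the toolkit already introduced, and never needs the pushforward identity $\Sigma_\nlin=\phi_\ast\Trop(g)$ --- the step you rightly flag as requiring care. That identity (tropicalization commutes with closed toric embeddings, with weights) is standard tropical elimination theory in the spirit of Sturmfels--Tevelev, but it is extra machinery the paper otherwise has no need for; if you want a self-contained argument, note that once you have the set-theoretic equality, the multiplicity claim for $\sigma_1,\sigma_2$ also follows directly from transversality of the hypersurfaces $\Trop(g),\Trop(p_i),\Trop(q_i)$ as in the paper, bypassing the projection formula.
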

\begin{proof}
  The statement follows straightforwardly from \cref{thm:transverseIntersection}, as the tropical hypersurfaces $\Trop(g)$, $\Trop(p_i)$ and $\Trop(q_i)$ all intersect transversally. Hence, we have up to refinement
  \[ \Sigma_\nlin = \Trop(g)\wedge \bigwedge_{i=1}^n\Big(\Trop(p_i)\wedge\Trop(q_i)\Big). \]
  Note that $\Trop(g)$ consists of three maximal cells of codimension $1$, while the intersection of the other $2n$ hypersurfaces is a linear space of codimension $2n$ in $\RR^{2n+3}$.  Together, they give rise to the three two-dimensional maximal cells $\sigma_0$, $\sigma_1$, and $\sigma_2$.  The multiplicity of $\sigma_1$ and $\sigma_2$ being $1$ follows directly from the definition.
\end{proof}

\begin{definition}\label{def:SigmaLinCells}
  For $\{i,j\}\in\binom{\{-2,\dots,n\}}{2}$, $i<j$, we define the following two polyhedral cones in $\RR^{2n+3}$:
  \begin{align*}
    \theta_{i,j} &\coloneqq \{ \overbrace{(v_{x_1},v_{x_2},v_w,v_{y_1},\dots,v_{y_n},v_{z_1},\dots,v_{z_n})}^{\eqqcolon v_{xwyz}}\in\RR^{2n+3}\mid\\
    &\hspace{12mm} \mathfrak y_i=\mathfrak y_j\leq \mathfrak y_k \text{ for all }k\neq i,j\},\\
    \zeta_{i,j} &\coloneqq \{ (v_{x_1},v_{x_2},v_w,v_{y_1},\dots,v_{y_n},v_{z_1},\dots,v_{z_n})\in\RR^{2n+3}\mid\\
    &\hspace{12mm}  \mathfrak z_i=\mathfrak z_j\leq \mathfrak z_k \text{ for all }k\neq i,j\},
  \end{align*}
  where for $i=-2,\dots,n$
  \begin{equation*}
    \mathfrak y_i\coloneqq
    \begin{cases}
      v_{x_{-i}} &\text{if } i<0,\\
      0 &\text{if } i=0,\\
      v_{y_i} &\text{if } i>0,
    \end{cases}
    \quad\text{and}\quad
    \mathfrak z_i\coloneqq
    \begin{cases}
      v_{x_{-i}} &\text{if } i<0,\\
      0 &\text{if } i=0,\\
      v_{z_i} &\text{if } i>0.
    \end{cases}
  \end{equation*}
  Moreover, for $\{k,l\}\in\binom{\{-2,\dots,n\}}{2}$, $k\in\{i,j\}$, we say $v_{xwyz}\in\RR^{2n+3}$ \emph{satisfies condition} $(k,l)$ of $\theta_{i,j}$ and $\zeta_{i,j}$ to be respectively
  \begin{equation*}
    \begin{cases}
      \mathfrak y_k = \mathfrak y_l &\text{if } \{k,l\}=\{i,j\},\\
      \mathfrak y_k \leq \mathfrak y_l &\text{if } l\notin\{i,j\},\\
    \end{cases}
    \quad\text{and}\quad
    \begin{cases}
      \mathfrak z_k = \mathfrak z_l &\text{if } \{k,l\}=\{i,j\},\\
      \mathfrak z_k \leq \mathfrak z_l &\text{if } l\notin\{i,j\}.\\
    \end{cases}
  \end{equation*}
\end{definition}

\begin{example}
  Below are conditions for selected $\{i,j\}\in\binom{\{-2,\dots,n\}}{2}$:
  \[ \arraycolsep=0pt
    \begin{array}{R{15mm} C{2mm}|C{2mm} L{4mm}C{7mm}L{4mm} C{4mm} L{4mm}C{7mm}L{4mm}}
      (k,l)   &&& \multicolumn{3}{c}{\theta_{-1,0}} && \multicolumn{3}{c}{\zeta_{-1,0}} \\ \hline
      (-1,0)  &&& v_{x_1}&=&0          && v_{x_1}&=& 0 \\
      (0,-1)  &&& 0&=&v_{x_1}          && 0&=& v_{x_1} \\
      (0,-2)  &&&      0&\leq& v_{x_2} && 0     &\leq& v_{x_2}\\
      (-1,-2) &&& v_{x_1}&\leq& v_{x_2} && v_{x_1}&\leq& v_{x_2}\\
      (0,s)   &&&      0&\leq& v_{y_s} && 0     &\leq& v_{z_s}\\
      (-1,s)  &&& v_{x_1}&\leq& v_{y_s} && v_{x_1}&\leq& v_{z_s}\\[8mm]
    \end{array}\quad \begin{array}{R{17mm} C{2mm}|C{2mm} L{4mm}C{7mm}L{4mm} C{5mm} L{4mm}C{7mm}L{4mm}}
      (k,l)          &&& \multicolumn{3}{c}{\theta_{-1,n}} && \multicolumn{3}{c}{\zeta_{-1,n}} \\ \hline
      (-1,n)  &&& v_{x_1}&=&v_{y_n}     && v_{x_1}&=& v_{z_n} \\
      (n,-1)  &&& v_{x_1}&=&v_{y_n}     && v_{x_1}&=& v_{z_n} \\
      (-1,0)  &&& v_{x_1}&\leq& 0      && v_{x_1}&\leq& 0\\
      (n,0)   &&& v_{y_n}&\leq& 0      && v_{z_n}&\leq& 0\\
      (-1,-2) &&& v_{x_1}&\leq& v_{x_2} && v_{x_1}&\leq& v_{x_2}\\
      (n,-2)  &&& v_{y_n}&\leq& v_{x_2} && v_{z_n}&\leq& v_{x_2}\\
      (-1,s)  &&& v_{x_1}&\leq& v_{y_s} && v_{x_1}&\leq& v_{z_s}\\
      (n,s)   &&& v_{y_n}&\leq& v_{y_s} && v_{z_n}&\leq& v_{z_s}
    \end{array}
  \]
  In the left table $0<s\le n$ and in the right table $0<s\le n-1$.
\end{example}

\begin{lemma}\label{lem:SigmaLin}
  Let $f_1,f_2$ be as defined in \cref{def:modification}, and let $\tau_{i,j;k,l}$ be as defined in \cref{def:SigmaLinCells}.
  Then there is a Zariski dense parameter set $U_0\subseteq K^{|a|+|b|}$ such that for all $P\in U_0$ and up to refinement the maximal cells of
  \[ \Sigma_\lin\coloneqq \Trop\Big(\langle f_{1,P},f_{2,P}\rangle\Big) \]
  consists of all $\tau_{i,j;k,l}\coloneqq \theta_{i,j} \cap \zeta_{k,l}$ satisfying the following condition:
  \begin{equation}\label{eqn:thetaAndZetaNonOverlapping}\{i,j\}=\{k,l\}\subseteq \{-2,-1,0\} \quad \text{does not hold.}
  \end{equation}
\end{lemma}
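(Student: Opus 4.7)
The plan is to first describe $\Trop(f_{1,P})$ and $\Trop(f_{2,P})$ explicitly for generic $P$, then verify transversality of their intersection on a cell-by-cell basis via \cref{thm:transverseIntersection}, and finally rule out the non-transverse candidate cells by a direct computation of the initial ideal.

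First, I would take $U_0$ to be the Zariski-dense open locus on which every coefficient of $f_{1,P}$ and $f_{2,P}$ is a unit of valuation $0$ and every $2\times 2$ residue-field minor formed from pairs of corresponding coefficients indexed by $\{-2,-1,0\}$ is nonzero.  For such $P$, the tropical polynomial $\trop(f_{1,P})$ equals $\min_{i\in\{-2,\dots,n\}}\mathfrak y_i$, whose loci where the minimum is attained exactly twice are by construction the cones $\theta_{i,j}$, giving the maximal cells of $\Trop(f_{1,P})$; analogously the maximal cells of $\Trop(f_{2,P})$ are the $\zeta_{k,l}$.  I would then analyse when $\theta_{i,j}$ and $\zeta_{k,l}$ intersect transversely in $\RR^{2n+3}$: the hyperplane cutting out $\theta_{i,j}$ involves only the coordinates $v_{x_1}, v_{x_2}$ and at most one $v_{y_s}$, while that cutting out $\zeta_{k,l}$ involves only $v_{x_1}, v_{x_2}$ and at most one $v_{z_s}$, so a short case analysis shows their normal vectors are parallel precisely when both lie in $\spanoperator(e_{x_1}, e_{x_2})$ and coincide, i.e.\ exactly when $\{i,j\}=\{k,l\}\subseteq\{-2,-1,0\}$.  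For every pair satisfying \cref{eqn:thetaAndZetaNonOverlapping} the intersection at an interior point of $\tau_{i,j;k,l}$ is thus locally transverse, and applying \cref{thm:transverseIntersection} to the stars of the two hypersurfaces at such a point produces $\tau_{i,j;k,l}$ as a codimension-$2$ cell of $\Sigma_\lin$.

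The main obstacle is showing that the three forbidden pairs with $\{i,j\}=\{k,l\}\subseteq\{-2,-1,0\}$ contribute no additional maximal cell, since $\theta_{i,j}\cap\zeta_{i,j}$ is then a codimension-$1$ subset of $\Trop(f_{1,P})\cap\Trop(f_{2,P})$ which a priori might overlap $\Sigma_\lin$.  To rule this out, I would pick an interior point $w$ of $\theta_{i,j}\cap\zeta_{i,j}$ and compute directly $\init_w(f_{1,P}) = \bar\alpha_i X_i + \bar\alpha_j X_j$ and $\init_w(f_{2,P}) = \bar\beta_i X_i + \bar\beta_j X_j$, where $X_k\in\{1,x_1,x_2\}$ denotes the monomial indexed by $k$ and $\alpha_k,\beta_k$ the corresponding coefficients.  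By the genericity assumption, the minor $\bar\alpha_i\bar\beta_j - \bar\alpha_j\bar\beta_i$ is nonzero, so an appropriate $\mathfrak K$-linear combination of $\init_w(f_{1,P})$ and $\init_w(f_{2,P})$ is a nonzero scalar multiple of $X_j$; hence $\init_w\langle f_{1,P},f_{2,P}\rangle$ contains a Laurent monomial, and $w\notin\Sigma_\lin$.

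Combining these observations with the containment $\Sigma_\lin\subseteq\Trop(f_{1,P})\cap\Trop(f_{2,P}) = \bigcup\tau_{i,j;k,l}$ pins down, up to refinement, the maximal cells of $\Sigma_\lin$: those $\tau_{i,j;k,l}$ satisfying \cref{eqn:thetaAndZetaNonOverlapping} are genuine codimension-$2$ cells of $\Sigma_\lin$, while the interiors of the three remaining candidates are excluded.
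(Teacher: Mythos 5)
Your proof is correct, but it takes a genuinely different route from the paper's.  The paper invokes \cite[Theorem~3.6.1]{MS2015} (the generic-translation result for stable intersections) together with torus-equivariance to obtain a Zariski dense $U_0$ with $\Trop(\langle f_{1,P},f_{2,P}\rangle)=\Trop(f_{1,P})\stsect\Trop(f_{2,P})$, and then simply reads off which $\theta_{i,j}$, $\zeta_{k,l}$ meet transversely.  You avoid that global genericity result entirely: you pick $U_0$ concretely (coefficients of valuation $0$ together with nonvanishing of the three $2\times 2$ residue-field minors indexed by $\{-2,-1,0\}$), verify transversality of $\theta_{i,j}$ and $\zeta_{k,l}$ cell by cell, and then add the direct computation of $\init_w\langle f_{1,P},f_{2,P}\rangle$ at an interior point of the three codimension-$1$ overlaps $\theta_{i,j}\cap\zeta_{i,j}$ with $\{i,j\}\subseteq\{-2,-1,0\}$ to show those overlaps contribute nothing to $\Sigma_\lin$.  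This last step is the genuine substitute for the genericity argument, and your minor condition is exactly what makes the elimination produce a Laurent monomial in the initial ideal.  Two small things to tighten.  First, \cref{thm:transverseIntersection} is stated with a global transversality hypothesis, so for the cell-by-cell inclusion you want the local versions \cite[Corollaries~5.1.2, 5.1.3]{OssermanPayne13} applied to the stars at an interior point of $\tau_{i,j;k,l}$; since those corollaries are what the proof of \cref{thm:transverseIntersection} rests on, this is a formality, but worth citing precisely.  Second, you leave the multiplicity-one claim implicit; the paper settles it with the observation that $\Sigma_\lin$ is a tropical linear space, and this fact is used later in the proof of \cref{thm:mainSystem}, so it is worth stating explicitly (your transversality computation plus \cite[Corollary~5.1.3]{OssermanPayne13} also gives it, since the lattice index of a hyperplane arrangement with primitive normals is $1$).
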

\begin{proof}
  By \cite[Theorem 3.6.1]{MS2015} and torus-equivariance, there is a Zariski dense set $U_0\subseteq K^{|a|+|b|}$ of elements with coordinate-wise valuation $0$ such that for $P\in U_0$
  \[ \Trop\Big(\langle f_{1,P},f_{2,P}\rangle\Big) = \Trop\big(f_{1,P}\big)\stsect \Trop\big(f_{2,P}\big). \]
  Note that the $\theta_{i,j}$ and $\zeta_{k,l}$ from \cref{def:SigmaLinCells} are the maximal cells of $\Trop(f_{1,P})$ and $\Trop(f_{2,P})$, respectively.  Condition~\eqref{eqn:thetaAndZetaNonOverlapping} excludes all $\theta_{i,j}$ and $\zeta_{k,l}$ that do not intersect transversally, the remaining $\tau_{i,j;k,l}$ are maximal cells of the stable intersection by definition.
  Each cell being of multiplicity one follows from the fact that $\Sigma_\lin$ is a tropical linear space.
\end{proof}

\begin{remark}
  Note that the $\tau_{i,j;k,l}$ in \cref{lem:SigmaLin} are not necessarily distinct.  For example, $\tau_{-2,-1;-1,0}=\tau_{-2,0;-1,0}$.  Important for us is only the fact that $\tau_{i,j;k,l}$ is always a maximal cell.
\end{remark}

\begin{lemma}\label{lem:stableIntersectionSigma0}
  Let $\sigma_0$ be defined as in \cref{lem:SigmaNlin} and let $\Sigma_\lin$ be defined as in \cref{lem:SigmaLin}.  Fix a perturbation
  \[ u\coloneqq e_{z_n} - 3n\cdot e_{x_1} - n\cdot e_{x_2}. \]
  Then $\Sigma_\lin\cap (\sigma_0+u)=\emptyset$.
\end{lemma}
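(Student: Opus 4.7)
The plan is to parametrize $\sigma_0+u$ explicitly and to show, by a case analysis on the minimizer structure of $\mathfrak y$ and $\mathfrak z$, that no point of $\sigma_0+u$ lies in any maximal cell $\tau_{i,j;k,l}$ of $\Sigma_\lin$.  Writing a generic point of $\sigma_0+u$ as $v(\alpha,\beta)\coloneqq \alpha\bigl(e_w+\sum_{i=1}^n i(e_{y_i}+e_{z_i})\bigr)+\beta\cdot v_\nlin+u$ with $\alpha\geq 0$ and $\beta\in\RR$, one computes $\mathfrak y_{-2}=\mathfrak z_{-2}=\beta-n$, $\mathfrak y_{-1}=\mathfrak z_{-1}=\beta-3n$, $\mathfrak y_0=\mathfrak z_0=0$, $\mathfrak y_i=\mathfrak z_i=\lambda i+\beta$ for $1\leq i\leq n-1$ with $\lambda\coloneqq\alpha+m\beta$, $\mathfrak y_n=\lambda n+\beta$, and $\mathfrak z_n=\mathfrak y_n+1$.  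The crucial observation is that $\mathfrak y_s=\mathfrak z_s$ for all $s\neq n$ while $\mathfrak z_n=\mathfrak y_n+1$; the $+1$ originates from the $e_{z_n}$ summand of $u$ and will be the decisive ingredient below.

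The next step is to determine when the minimizer set $S_y\subseteq\{-2,\dots,n\}$ of $\mathfrak y$ has cardinality at least two, which is necessary for $v$ to lie in some $\theta_{i,j}$, and hence in $\Sigma_\lin$.  Since $\mathfrak y_{-1}<\mathfrak y_{-2}$ always, the index $-2$ is never minimizing, and $\mathfrak y_1,\dots,\mathfrak y_n$ form an arithmetic progression of common difference $\lambda$.  Splitting by the signs of $\beta-3n$ and $\lambda+3$ (using $\alpha\geq 0$ to bound $\lambda\geq m\beta$) shows that $|S_y|\geq 2$ happens only in two subcases: (a) $\beta=3n$, which forces $\lambda\geq 3mn>0$ and $S_y=\{-1,0\}$; and (b) $\beta<3n$ with $\lambda=-3$ (requiring $\alpha=-3-m\beta\geq 0$), which forces $S_y=\{-1,n\}$.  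In every other $(\alpha,\beta)$ the set $S_y$ is a singleton, so no $\theta_{i,j}$ contains $v$ and $v\notin\Sigma_\lin$ immediately.

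To finish, we rule out the two exceptional subcases.  In (a), since $\mathfrak y_s=\mathfrak z_s$ for $s\neq n$ and $\mathfrak z_n=\mathfrak y_n+1>0=\min\mathfrak y$, we have $S_z=S_y=\{-1,0\}$; hence the only $\tau_{i,j;k,l}$ that could contain $v$ is $\tau_{-1,0;-1,0}$, which is excluded from the list of maximal cells of $\Sigma_\lin$ by condition~\eqref{eqn:thetaAndZetaNonOverlapping}.  In (b), the tie $\mathfrak y_{-1}=\mathfrak y_n$ is broken upon passing to $\mathfrak z$ precisely because $\mathfrak z_n=\mathfrak y_n+1>\mathfrak z_{-1}$, collapsing $S_z$ to the singleton $\{-1\}$; no $\zeta_{k,l}$ then contains $v$, and again $v\notin\Sigma_\lin$.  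The main obstacle is case (b): the $e_{z_n}$ summand of $u$ is engineered precisely to break this tie on the $\mathfrak z$ side, while the $e_{x_1},e_{x_2}$ summands constrain $\beta$ enough to funnel the remaining potential tie $\{-1,0\}$ into the excluded cell.
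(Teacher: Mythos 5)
Your proof is correct and uses essentially the same approach as the paper: parametrize $\sigma_0+u$ by $(\alpha,\beta)$ (the paper's $(c,d)$), determine which $\theta_{i,j}$ and $\zeta_{k,l}$ could contain such a point, and rule them all out. The organizational framing is slightly tidier than the paper's, though the underlying algebra is the same. Where the paper incrementally excludes $-2$, then $0$, pins down $\{i_0,j_0\}=\{-1,n\}$, and concludes with a three-way split over $\{k_0,l_0\}\in\big\{\{-1,n-1\},\{-1,n\},\{n-1,n\}\big\}$, you compute the minimizer sets $S_y$ and $S_z$ of the affine functions $\mathfrak y_\bullet,\mathfrak z_\bullet$ directly, isolate the identity $\mathfrak z_s=\mathfrak y_s$ for $s\neq n$ with $\mathfrak z_n=\mathfrak y_n+1$, and observe that this $+1$ collapses $S_z$ to a singleton in case~(b). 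That single observation is exactly what the paper's three final subcases verify one at a time (each yielding $dm+c=0$, $0=1$, or $-dm=1$ against the already-established $-dm-c=3$), so the two proofs are logically the same; yours just names the asymmetry introduced by $u=e_{z_n}-3n\cdot e_{x_1}-n\cdot e_{x_2}$ explicitly, which makes the role of the perturbation more transparent.
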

\begin{proof}
  Assume there is an intersection point $P\in\Sigma_\lin\cap(\sigma_0+u)$, say $P\in\tau_{i_0,j_0;k_0,l_0}$ for some $\tau_{i_0,j_0;k_0,l_0}\in\Sigma_\lin$.  Since $P\in\sigma_0+u$, there are $c,d\in\RR$, $c\geq 0$, such that
  \begin{align*}
    P &= c \cdot \Big( e_w+ \sum_{i=1}^n i(e_{y_i}+e_{z_i})\Big) \\
      &\hspace{12mm}+ d\cdot\Big(e_{x_1}+e_{x_2}+m\cdot e_w+\sum_{i=1}^n(mi+1)\cdot (e_{y_i}+e_{z_i})\Big)\\
      &\hspace{12mm}+(e_{z_n} - 3n\cdot e_{x_1} - n\cdot e_{x_2}) \\
      &= (d-3n) \cdot e_{x_1} + (d-n)\cdot e_{x_2}+(dm+c)\cdot e_w\\
      &\hspace{12mm} + \sum_{i=1}^n\Big( (ci+d(mi+1))\cdot (e_{y_i}+ e_{z_i})\Big)+e_{z_n}.
  \end{align*}

  First, we show that $-2\notin \{i_0,j_0,k_0,l_0\}$. Assume that $-2\in \{i_0,j_0\}$ or $-2\in\{k_0,l_0\}$, say $-2=i_0$ or $-2=k_0$. But then $d-n>d-3n$ contradicts either Condition $(-2,-1)$ of $\theta_{-2,j_0}$ or Condition $(-2,-1)$ of $\zeta_{-2,l_0}$.

  Second, we prove that $0\notin \{i_0,j_0,k_0,l_0\}$. Assume that $0\in\{i_0,j_0\}$ or $0\in\{k_0,l_0\}$, say $0=i_0$ or $0=k_0$.  Then either Condition $(0,-1)$ of $\theta_{0,j_0}$ or Condition $(0,-1)$ of $\zeta_{0,l_0}$ state that $0\leq d-3n$ and hence $d\geq 3n>0$, and all $y_i$ and $z_i$ coordinates of $P$ are positive. Hence. $\{i_0,j_0\}=\{0,-1\}=\{k_0,l_0\}$, contradicting Condition~\eqref{eqn:thetaAndZetaNonOverlapping}.

  Third, we show that $\{i_0,j_0\}=\{-1,n\}$ and $\{k_0,l_0\}\subseteq \{-1,n-1,n\}$.  We have shown above that both $\{i_0,j_0\}$ and $\{k_0,l_0\}$ contain an index greater than $0$. Without loss of generality, let $j_0,l_0>0$:
  \begin{enumerate}[leftmargin=7mm]
  \item\label{enumitem:dmc} Condition $(j_0,-1)$ of $\theta_{i_0,j_0}$ states $cj_0+d(mj_0+1)\leq d-3n$, which implies $dm+c\leq -\frac{3n}{j_0}<0$.  Condition $(j_0,n)$ of $\theta_{i_0,j_0}$ states $cj_0+d(mj_0+1)\leq cn+d(mn+1)$, which in turn implies $(j_0-n)(dm+c)\leq 0$.  Combining both yields $j_0= n$.

    \noindent Since $i_0\neq j_0$, this also shows that $i_0$ cannot be greater than $0$ (otherwise it is also $n$). Hence $\{i_0,j_0\}=\{-1,n\}$.
  \item As the $y_i$ and $z_i$ coordinates of $P$ are the same for $i=1,\dots,n-1$, we can use the same arguments as above to obtain $l_0=n-1$ in the case $l_0<n$.  Hence $l_0=n-1$ or $l_0=n$, and therefore $\{k_0,l_0\}\subseteq \{-1,n-1,n\}$.
  \end{enumerate}

  \noindent
  Next, note that since $\{i_0,j_0\} = \{-1,n\}$, Condition $(-1,n)$ of $\theta_{-1,n}$ states
  \begin{equation}
    \label{eq:LemmaSigma0}
    d-3n = cn+d(mn+1)\quad\text{or equivalently}\quad -c-dm=3.
  \end{equation}
  Finally, we distinguish between three cases, leading each case to a contradiction:
  \begin{description}[leftmargin=7mm]
  \item[\textcolor{blue!50!black}{$\mathbf{\{k_0,l_0\}=\{-1,n-1\}}$}] Condition $(-1,n-1)$ of $\zeta_{-1,n-1}$ states $d-3n=c(n-1)+d(m(n-1)+1)$.  Combined with $d-3n=cn+d(mn+1)$ from \eqref{eq:LemmaSigma0}, we obtain $dm+c=0$, contradicting $dm+c<0$ established in Step \eqref{enumitem:dmc}.
  \item[\textcolor{blue!50!black}{$\mathbf{\{k_0,l_0\}=\{-1,n\}}$}] Condition $(-1,n)$ of $\zeta_{-1,n}$ states $d-3n=cn+d(mn+1)+1$.  This contradicts \eqref{eq:LemmaSigma0}.
  \item[\textcolor{blue!50!black}{$\mathbf{\{k_0,l_0\}=\{n-1,n\}}$}] Condition $(n-1,n)$ of $\zeta_{n-1,n}$ states $c(n-1)+d(m(n-1)+1)=cn+d(mn+1)+1$ or equivalently $1=-dm-c$. This contradicts \eqref{eq:LemmaSigma0}.\end{description}
\end{proof}

\begin{lemma}\label{lem:stableIntersectionSigma1}
  Let $\sigma_1$ be defined as in \cref{lem:SigmaNlin} and let $\Sigma_\lin$ be defined as in \cref{lem:SigmaLin}.   Fix the same perturbation as in \cref{lem:stableIntersectionSigma0}:
  \[ u\coloneqq e_{z_n} - 3n\cdot e_{x_1} - n\cdot e_{x_2}. \]
  Then $\Sigma_\lin\cap (\sigma_1+u)=\emptyset$.
\end{lemma}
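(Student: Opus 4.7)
The strategy is to mirror the proof of \cref{lem:stableIntersectionSigma0}. Assume toward contradiction that $P\in\Sigma_\lin\cap(\sigma_1+u)$ lies in some maximal cell $\tau_{i_0,j_0;k_0,l_0}$, and write
\[ P = c\cdot\Big(e_{x_2}+\sum_{i=1}^n e_{z_i}\Big) + d\cdot v_\nlin + u \]
for some $c\geq 0$ and $d\in\RR$. Reading off coordinates gives $v_{x_1}=d-3n$, $v_{x_2}=c+d-n$, $v_w=dm$, $v_{y_i}=d(mi+1)$, $v_{z_i}=c+d(mi+1)$ for $i<n$, and $v_{z_n}=c+d(mn+1)+1$. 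The key observation is that the $\mathfrak{y}$-coordinates depend only on $d$ (not on $c$), while the $\mathfrak{z}$-coordinates differ by the uniform shift $c$ together with an additional $+1$ at index $n$.

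As in the previous lemma, one first rules out $-2\in\{i_0,j_0,k_0,l_0\}$: condition $(-2,-1)$ of either $\theta_{-2,j_0}$ or $\zeta_{-2,l_0}$ would force $c+d-n\leq d-3n$, i.e.\ $c\leq -2n$, contradicting $c\geq 0$.

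The main new step is a case analysis on $\{i_0,j_0\}$. Since $i\mapsto d(mi+1)$ is injective in $i$ whenever $d\neq 0$, and since $d=0$ would force $\mathfrak{y}_{-1}=-3n$ to be strictly less than $\mathfrak{y}_{i_0}\in\{0,d(mi_0+1)\}$ for any $i_0\geq 0$ and thus violate either equality or inequality prescribed by $\theta_{i_0,j_0}$, one shows that the only possibilities are $\{i_0,j_0\}=\{-1,0\}$ with $d=3n$, or $\{i_0,j_0\}=\{-1,n\}$ with $d=-3/m$. Each is then ruled out on the $\zeta$-side: in the first subcase, all $\mathfrak{z}$-values other than $\mathfrak{z}_{-1}=\mathfrak{z}_0=0$ are strictly positive, forcing $\{k_0,l_0\}=\{-1,0\}=\{i_0,j_0\}\subseteq\{-2,-1,0\}$ and violating Condition~\eqref{eqn:thetaAndZetaNonOverlapping}; in the second subcase, $\mathfrak{z}_{-1}=-3/m-3n$ is the strict minimum, since equality with $\mathfrak{z}_i$ for $0<i<n$ would force $c=3(i-n)<0$, equality with $\mathfrak{z}_n$ would force $c=-1$, and equality with $\mathfrak{z}_0$ or $\mathfrak{z}_{-2}$ is ruled out directly, leaving no admissible pair $\{k_0,l_0\}$.

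The main obstacle is the $\{i_0,j_0\}$ case analysis, and what makes it succeed is the specific structure of $u$: the imbalance between $-3n\cdot e_{x_1}$ and $-n\cdot e_{x_2}$ is precisely what excludes $-2$ from the index set, and the extra $+e_{z_n}$ term is precisely what prevents $\mathfrak{z}_n=\mathfrak{z}_{-1}$ in the $d=-3/m$ subcase.
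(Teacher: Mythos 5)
Your proof is correct and follows essentially the same strategy as the paper's: parametrise $P$ by $(c,d)$, rule out $-2\in\{i_0,j_0,k_0,l_0\}$, pin down $\{i_0,j_0\}$ and the corresponding value of $d$, then derive a contradiction from the $\zeta$-conditions. The only differences are organizational: you keep $\{i_0,j_0\}=\{-1,0\}$ as a subcase and kill it via the $\zeta$-side (whereas the paper rules out $0\in\{i_0,j_0,k_0,l_0\}$ upfront), and in the $\{-1,n\}$ case you show $\mathfrak{z}_{-1}$ is the unique minimum rather than running the three-case analysis on $\{k_0,l_0\}$, but the underlying computations are identical.
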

\begin{proof}
  Assume there is an intersection point $P\in\Sigma_\lin\cap (\sigma_1+u)$, say $P\in \tau_{i_0,j_0;k_0,l_0}$ for some $\tau_{i_0,j_0;k_0,l_0}\in\Sigma_\lin$.  Since $P\in\sigma_1+u$, there are $c,d\in\RR$, $c\geq 0$, such that
  \begin{align*}
    P &= c \cdot \Big( e_{x_2} + \sum_{i=1}^n e_{z_i}\Big) + d\cdot\Big(e_{x_1}+e_{x_2}+m\cdot e_w+\sum_{i=1}^n(mi+1)\cdot (e_{y_i}+e_{z_i})\Big)\\
      &\hspace{12mm}+(e_{z_n} - 3n\cdot e_{x_1} - n\cdot e_{x_2}) \\
      &= (d-3n)\cdot e_{x_1} + (c+d-n)\cdot e_{x_2}+dm\cdot e_w \\
      &\hspace{12mm}+ \sum_{i=1}^n\Big( d(mi+1)\cdot e_{y_i}+(d\cdot (mi+1)+c)\cdot e_{z_i}\Big)+e_{z_n}.
  \end{align*}

  First, we show that $-2\notin \{i_0,j_0,k_0,l_0\}$. Assume that $-2\in \{i_0,j_0\}$ or $-2\in\{k_0,l_0\}$, say $-2=i_0$ or $-2=k_0$. But then $c+d-n>d-3n$ contradicts either Condition $(-2,-1)$ of $\theta_{-2,j_0}$ or Condition $(-2,-1)$ of $\zeta_{-2,l_0}$.

  Second, we prove that $0\notin \{i_0,j_0,k_0,l_0\}$. Assume that $0\in\{i_0,j_0\}$ or $0\in\{k_0,l_0\}$, say $0=i_0$ or $0=k_0$.  In both cases, Condition $(0,-1)$ of $\theta_{0,j_0}$ or Condition $(0,-1)$ of $\zeta_{0,l_0}$ state that $0\leq d-3n$ and hence $d\geq 3n>0$.  Then all $y_i$ and $z_i$ coordinates of $P$ are positive. This implies $\{i_0,j_0\}=\{0,-1\}=\{k_0,l_0\}$, which contradicts Condition~\eqref{eqn:thetaAndZetaNonOverlapping}.

  Third, we show that $\{i_0,j_0\}=\{-1,n\}$ and $\{k_0,l_0\}\subseteq \{-1,n-1,n\}$.  We have shown that both $\{i_0,j_0\}$ and $\{k_0,l_0\}$ contain an index greater than $0$. Without loss of generality, let $j_0,l_0>0$:
  \begin{itemize}[leftmargin=7mm]
  \item Condition $(j_0, -1)$ of $\theta_{i_0,j_0}$ states $d(mj_0+1)\leq d-3n$, which implies $d<0$. Condition $(j_0, n)$ of $\theta_{i_0,j_0}$ states $d(mj_0+1)\leq d(mn+1)$, which implies $j_0=n$.
    Since $i_0\neq j_0$, this also shows that $i_0$ cannot be greater than $0$ (otherwise it is also $n$). Hence $\{i_0,j_0\}=\{-1,n\}$.
  \item Suppose $l_0<n$. Condition $(l_0, -1)$ of $\zeta_{k_0,l_0}$ states $d(ml_0+1)+c\leq d-3n$, which implies $d<0$.  Condition $(l_0, n - 1)$ states $d(ml_0+1)+c\leq d(m(n-1)+1)+c$.  Combining both yields $l_0=n-1$.
    Hence $l_0=n-1$ or $l_0=n$, and therefore $\{k_0,l_0\}\subseteq \{-1,n-1,n\}$.
  \end{itemize}

  \noindent
  Next, note that since $\{i_0,j_0\} = \{-1,n\}$, Condition $(-1, n)$ of $\theta_{-1,n}$ states
  \begin{equation}
    \label{eq:LemmaSigma1}
    d-3n = d(mn+1)\quad\text{or equivalently}\quad -dm=3.
  \end{equation}

  \noindent
  Finally, we distinguish between three cases, leading each case to a contradiction:

  \begin{description}[leftmargin=7mm]
  \item[\textcolor{blue!50!black}{$\mathbf{\{k_0,l_0\}=\{-1,n-1\}}$}] Condition $(-1, n - 1)$ of $\zeta_{-1,n-1}$ states $d-3n=d(m(n-1)+1)+c$.  Combined with $d-3n=d(mn+1)$ from \eqref{eq:LemmaSigma1}, we obtain $c=dm$, contradicting $c\geq 0$.
  \item[\textcolor{blue!50!black}{$\mathbf{\{k_0,l_0\}=\{-1,n\}}$}] Condition $(-1, n)$ of $\zeta_{-1,n}$ states $d-3n=d(mn+1)+c+1$.  Combined with $d-3n=d(mn+1)$ from \eqref{eq:LemmaSigma1}, we obtain $c=-1$, again contradicting $c\geq 0$.
  \item[\textcolor{blue!50!black}{$\mathbf{\{k_0,l_0\}=\{n-1,n\}}$}] Condition $(n - 1, n)$ of $\zeta_{n-1,n}$ states $d(m(n-1)+1)+c=d(mn+1)+c+1$, i.e., $-dm=1$. This contradicts $-dm=3$ from \eqref{eq:LemmaSigma1}.\end{description}
\end{proof}

\begin{lemma}\label{lem:stableIntersectionSigma2}
  Let $\Sigma_\nlin$ and $\sigma_\nlin$ be defined as in \cref{lem:SigmaNlin} and let $\Sigma_\lin$ be defined as in \cref{lem:SigmaLin}.  Fix again the following perturbation from \cref{lem:stableIntersectionSigma0}:
  \[ u\coloneqq e_{z_n} - 3n\cdot e_{x_1} - n\cdot e_{x_2}. \]
  Then $|\Sigma_\lin\cap (\sigma_2+u)|= \{P_1,P_2,P_3\}$, where
  \begin{align*}
    P_1 &\coloneqq v_2 - \frac{3}{m}\cdot v_\nlin + u &&\in \relint(\tau_{-1, n; -1, n}),\\
    P_2 &\coloneqq 2n\cdot v_2 - \frac{n+1}{mn}\cdot v_\nlin + u &&\in \relint(\tau_{-2,-1; -1, n}),\\
    P_3 &\coloneqq 2n\cdot v_2 + n\cdot v_\nlin + u &&\in \relint(\tau_{-2, -1; -1, 0})
  \end{align*}
  for $v_2\coloneqq e_{x_1} + \sum_{i = 1}^n e_{y_i}$ and $v_\nlin\coloneqq e_{x_1}+e_{x_2}+m\cdot e_w + \sum_{i=1}^n (mi+1)\cdot (e_{y_i}+e_{z_i})$.
\end{lemma}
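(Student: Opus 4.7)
The plan is to mirror the strategy of Lemmas~\ref{lem:stableIntersectionSigma0} and~\ref{lem:stableIntersectionSigma1}: parametrise an arbitrary point $P\in\sigma_2+u$ as
\[
  P=c\,v_2+d\,v_\nlin+u\qquad\text{with }c\geq 0,\ d\in\RR,
\]
and case-split on the pairs $(\{i_0,j_0\},\{k_0,l_0\})$ indexing cells $\tau_{i_0,j_0;k_0,l_0}\in\Sigma_\lin$ that may contain $P$. Expanding the above expression gives the coordinates $P_{x_1}=c+d-3n$, $P_{x_2}=d-n$, $P_w=dm$, $P_{y_i}=c+d(mi+1)$ for $i=1,\dots,n$, $P_{z_i}=d(mi+1)$ for $i<n$, and $P_{z_n}=d(mn+1)+1$. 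Notably the $\mathfrak z$-values depend only on $d$, while the $\mathfrak y$-values depend on both $c$ and $d$, reflecting the presence of the $v_2$-summand.

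For each candidate pair I would rewrite the defining equalities $\mathfrak y_{i_0}=\mathfrak y_{j_0}$ and $\mathfrak z_{k_0}=\mathfrak z_{l_0}$ as a linear system in $(c,d)$, solve it, and then verify both $c\geq 0$ and the remaining inequalities $\mathfrak y_{i_0}\leq\mathfrak y_k$ and $\mathfrak z_{k_0}\leq\mathfrak z_k$. The narrowing arguments from the previous two lemmas carry over almost verbatim: a positive index in either set, combined with Conditions involving $-1$ and $n$, forces that index to equal $n$ (or $n-1$), and $0\in\{i_0,j_0,k_0,l_0\}$ quickly yields a sign contradiction from positivity of the $y$- and $z$-coordinates. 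The essential new feature is that $cv_2$ can now make $P_{x_1}\leq P_{x_2}$ whenever $c\geq 2n$, so the pair $\{-2,-1\}$ can no longer be excluded outright as in \cref{lem:stableIntersectionSigma1}; it is precisely this new regime that produces the two extra intersection points $P_2$ and $P_3$.

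After these exclusions, only the three pairs $(\{-1,n\};\{-1,n\})$, $(\{-2,-1\};\{-1,n\})$ and $(\{-2,-1\};\{-1,0\})$ admit valid $(c,d)$, and a direct substitution into the coordinate formulas then gives $P_1$, $P_2$, $P_3$ respectively. The main obstacle is handling $P_3$, at which one computes $\mathfrak y_{-2}=\mathfrak y_{-1}=\mathfrak y_0=0$ and $\mathfrak z_{-2}=\mathfrak z_{-1}=\mathfrak z_0=0$, so that $P_3$ sits on the common boundary of several $\tau_{i,j;k,l}$; to justify the claim $P_3\in\relint(\tau_{-2,-1;-1,0})$ one interprets the relative interior with respect to the refinement of $\Sigma_\lin$ induced by intersecting with $\sigma_2+u$, or equivalently one checks that after a further small generic perturbation of $u$ the three $\mathfrak y$-coincidences and three $\mathfrak z$-coincidences break in exactly the way prescribed by $\tau_{-2,-1;-1,0}$ and not by any neighbouring cell. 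This infinitesimal bookkeeping is the step requiring the most care and is the only place where the specific shape of $u$ from \cref{lem:stableIntersectionSigma0} is used beyond its role as a translation vector.
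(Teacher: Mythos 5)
Your overall strategy (parametrise $P=c\,v_2+d\,v_\nlin+u$, narrow down the index pairs, solve the resulting linear systems in $(c,d)$) matches the paper's proof. However, there is a genuine error in the narrowing step. You claim that, as in Lemmas~\ref{lem:stableIntersectionSigma0} and~\ref{lem:stableIntersectionSigma1}, ``$0\in\{i_0,j_0,k_0,l_0\}$ quickly yields a sign contradiction from positivity of the $y$- and $z$-coordinates.'' This does \emph{not} carry over. In the earlier two lemmas, the $x_1$-coordinate of $P$ is $d-3n$, so Condition $(0,-1)$ gives $d\geq 3n>0$ and hence all $\mathfrak y_k,\mathfrak z_k$ with $k>0$ are positive, forcing both index pairs to $\{-1,0\}$. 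Here, however, $P_{x_1}=c+d-3n$, so Condition $(0,-1)$ only yields $c+d\geq 3n$ with no lower bound on $d$, and the argument fails. Indeed, the pairs with $0$ are not excludable: the point $P_3$ has $\mathfrak y_{-2}=\mathfrak y_{-1}=\mathfrak y_0=0$ and $\mathfrak z_{-2}=\mathfrak z_{-1}=\mathfrak z_0=0$, and it arises from index pairs such as $(i_0,j_0)=(-1,0)$, $(-2,0)$, or $(k_0,l_0)=(-1,0)$. Your own case list ends by declaring $(\{-2,-1\};\{-1,0\})$ valid, contradicting your exclusion of $0$; the case analysis must genuinely run over $\{i_0,j_0\}\subseteq\{-2,-1,0,n\}$ and $\{k_0,l_0\}\subseteq\{-2,-1,0,n-1,n\}$ with $0$ included, which is precisely the (lengthier) enumeration the paper performs.

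Your worry about $P_3\in\relint(\tau_{-2,-1;-1,0})$ requiring ``infinitesimal bookkeeping'' or a further perturbation of $u$ is not a real issue. As a polyhedron, $\tau_{-2,-1;-1,0}=\theta_{-2,-1}\cap\zeta_{-1,0}$ is $\{v: v_{x_1}=v_{x_2}=0,\ v_{y_i}\geq 0,\ v_{z_i}\geq 0\}$, whose relative interior is $\{v: v_{x_1}=v_{x_2}=0,\ v_{y_i}>0,\ v_{z_i}>0\}$. The point $P_3$ has $P_{x_1}=P_{x_2}=0$ and all $y$- and $z$-coordinates strictly positive, so $P_3\in\relint(\tau_{-2,-1;-1,0})$ in the ordinary sense. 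The additional coincidences $\mathfrak y_{-1}=\mathfrak y_0$ and $\mathfrak z_{-2}=\mathfrak z_{-1}$ at $P_3$ are not ``boundary'' phenomena: they are forced by the defining equalities $v_{x_1}=v_{x_2}=0$ of this cell. What you observed is only that the labeling $(i,j;k,l)\mapsto\tau_{i,j;k,l}$ is not injective (e.g.\ $\tau_{-2,-1;-1,0}=\tau_{-2,0;-1,0}$), a bookkeeping point the paper records in the remark after Lemma~\ref{lem:SigmaLin}; the underlying cell and its relative interior are unambiguous, and no generic perturbation of $u$ is needed.
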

\begin{proof}
  It is straightforward to show that $P_1$, $P_2$, and $P_3$ are intersection points and contained in the relative interior of the cones specified above.  It remains to show that $P_1$, $P_2$, and $P_3$ are the only intersection points.

  Consider an intersection point $P\in\Sigma_\lin\cap (\sigma_2+u)$, say $P\in \tau_{i_0,j_0;k_0,l_0}$ for some $\tau_{i_0,j_0;k_0,l_0}\in\Sigma_\lin$.  We will now show that $P=P_1$, $P_2$, or $P_3$.  Since $P\in\sigma_2+u$, there are $c,d\in\RR$, $c\geq 0$, such that
  \begin{align*}
    P &= c \cdot \Big( e_{x_1} + \sum_{i=1}^n e_{y_i}\Big) + d\cdot\Big(e_{x_1}+e_{x_2}+m\cdot e_w+\sum_{i=1}^n(mi+1)\cdot (e_{y_i}+e_{z_i})\Big)\\
      &\hspace{12mm}+(e_{z_n} - 3n\cdot e_{x_1} - n\cdot e_{x_2}) \\
      &= (c+d-3n)\cdot e_{x_1} + (d-n)\cdot e_{x_2}+dm\cdot e_w \\
      &\hspace{12mm}+ \sum_{i=1}^n\Big( (d\cdot (mi+1)+c)\cdot e_{y_i}+d(mi+1)\cdot e_{z_i}\Big)+e_{z_n}.
  \end{align*}

  Suppose $0<j_0$.  Then Condition $(j_0,-2)$ of $\theta_{i_0,j_0}$ states $d(mj_0+1)+c\leq d-n$, which implies $d<0$. And Condition $(j_0,n)$ of $\theta_{i_0,j_0}$ states $d(mj_0+1)+c\leq d(mn+1)+c$, which together imply $j_0=n$.  As the same argument holds true for $i_0$, we get $\{i_0,j_0\}\subseteq\{-2,-1,0,n\}$.

  Suppose $0<l_0<n$. Then Condition $(l_0,-2)$ of $\zeta_{k_0,l_0}$ states $d(ml_0+1)\leq d-n$, which implies $d<0$.  And Condition $(l_0,n-1)$ of $\zeta_{k_0,l_0}$ states $d(ml_0+1)\leq d(m(n-1)+1)$, which together imply $l_0=n-1$.  As the same argument holds true for $k_0$, we get $\{k_0,l_0\}\subseteq\{-2,-1,0,n-1,n\}$.

  Note that we always have $d\leq n$, as otherwise all coordinates of $P$ other than $x_1$ are positive.  This would imply $\{i_0,j_0\}=\{-1,0\}=\{k_0,l_0\}$, contradicting Condition~\eqref{eqn:thetaAndZetaNonOverlapping}.

  We now distinguish between the following cases, showing either that $P=P_1,P_2,P_3$ or deriving a contradiction:
  \begin{description}[leftmargin=5mm]
  \item[\textcolor{blue!50!black}{$\mathbf{(i_0,j_0)=(-1,0)}$}]
    First, Condition $(0,-2)$ of $\theta_{-1,0}$ states $0\leq d-n$, which together with $d\leq n$ implies $d=n$.
    Second, Condition $(0,-1)$ of $\theta_{-1,0}$ states $0=c+d-3n$, hence $c=2n$.  This shows that $P=P_3$.
  \item[\textcolor{blue!50!black}{$\mathbf{(i_0,j_0)=(-2,0)}$}]
    Condition $(0,-2)$ of $\theta_{-2,0}$ states $0=d-n$, which implies that all $y_i$ and $z_i$ coordinates of $P$ are positive.  Then $P\in\zeta_{k_0,l_0}$ and Condition~\eqref{eqn:thetaAndZetaNonOverlapping} imply $\{k_0,l_0\}=\{-2,-1\}$ or $\{k_0,l_0\}=\{-1,-0\}$.  Both cases imply that the $x_1$ and $x_2$ coordinates of $P$ are $0$, i.e., $d-n=c+d-3n=0$.  This implies $c=2n$, showing that $P=P_3$.
  \item[\textcolor{blue!50!black}{$\mathbf{(i_0,j_0)=(0,n)}$}]
    First, Condition $(0,n)$ of $\theta_{0,n}$ states $0=d(mn+1)+c$, which implies that $d\leq 0$.  Second, Condition $(0,-2)$ of $\theta_{0,n}$ states $0\leq d-n$, which in turn implies that $n\leq d\leq 0$, contradicting $n>0$.
  \item[\textcolor{blue!50!black}{$\mathbf{(i_0,j_0)=(-1,n)}$}] Condition $(-1, n)$ of $\theta_{-1, n}$ states $c + d - 3n = d(mn + 1) + c$ which implies $d = -\frac{3}{m}$. We then have
    \[ d(mn + 1) + 1 = -3n-\frac{3}{m} + 1 < -3n-\frac{3}{m} + 3 = d(m(n-1) + 1) < 0.\]
    If $0 \in \{k_0, l_0\}$ (say $0 = k_0$), Condition $(0, n)$ of $\zeta_{0, l_0}$ states $0 \le d(mn + 1) + 1$ which is a contradiction. Similarly, if $n - 1 \in \{k_0, l_0\}$ (say $n - 1 = k_0$) then Condition $(n - 1, n)$ of $\zeta_{n - 1, l_0}$ states $d(m(n - 1) + 1) \le d(mn + 1) + 1$, which is a contradiction. So $0,n-1\notin\{k_0, l_0\}$.

If $-2 \in \{k_0, l_0\}$ (say $-2 = k_0$), Condition $(-2, n)$ of $\zeta_{-2, l_0}$ states $d - n \le d(mn + 1)+1$, which together with $d=-\frac{3}{m}$ leads to a contradiction.

    The only remaining possibility is $\{k_0, l_0\} = \{-1, n\}$. Then Condition $(-1, n)$ of $\zeta_{-1, n}$ states $c + d - 3n = d(mn + 1) + 1$, which together with $c + d - 3n = d(mn + 1) + c$, implies that $c = 1$. Since also $d = -\frac{3}{m}$, this shows $P = P_1$.
  \item[\textcolor{blue!50!black}{$\mathbf{(i_0,j_0)=(-2,n)}$}] Condition $(n, -1)$ of $\theta_{-2, n}$ states that $d(mn + 1) + c \le c + d - 3n$ which implies $d \le -\frac{3}{m}$.  If $0 \in \{k_0, l_0\}$ (say $0 = k_0$), then Condition $(0, n)$ of $\zeta_{0, l_0}$ states $0 \le d(mn + 1) + 1$ which leads to a contradiction to $d\leq -\frac{3}{m}$. If $-2 \in \{k_0, l_0\}$ (say $-2 = k_0$), then Condition $(-2, n)$ of $\zeta_{-2, l_0}$ states $d - n \le d(mn + 1) + 1$ which leads to a contradiction to $d\leq -\frac{3}{m}$. If $n - 1 \in \{k_0, l_0\}$ (say $n - 1 = k_0$), then Condition $(n - 1, n)$ of $\zeta_{n - 1, l_0}$ states $d(m(n - 1) + 1) \le d(mn + 1) + 1$ which leads to a contradiction to $d\leq -\frac{3}{m}$. If $\{k_0, l_0\} = \{-1, n\}$ (say $-1 = k_0$), then Condition $(-1, -2)$ of $\zeta_{-1, l_0}$ and Condition $(-2, -1)$ of $\theta_{-2, n}$ state $c + d - 3n \le d - n$ and $d - n \le c + d - 3n$ hence $d + c - 3n = d - n$, implying $c = 2n$. Condition $(-2, n)$ of $\theta_{-2, n}$ and Condition $(-1, n)$ of $\zeta_{-1, n}$ state that $d - n = d(mn + 1) + c$ and $c + d - 3n = d(mn + 1) + 1$.  Substituting $c=2n$ then leads to a contradiction.
  \item[\textcolor{blue!50!black}{$\mathbf{(i_0,j_0)=(-2,-1)}$}] Condition $(-2, -1)$ of $\theta_{-2, -1}$ states $c + d - 3n = d - n$ which implies $c = 2n$.

    If \textcolor{red!75!black}{$\{k_0, l_0\} = \{-1, 0\}$}, then Condition $(-1, 0)$ of $\zeta_{-1, 0}$ states $c + d - 3n = 0$ which implies $d = n$. If \textcolor{red!75!black}{$\{k_0, l_0\} = \{-2, 0\}$}, then Condition $(-2, 0)$ of $\zeta_{-2, 0}$ states $d - n = 0$ so $d = n$. In both cases, we have $P = P_3$.

    If \textcolor{red!75!black}{$\{k_0, l_0\} = \{-1, n\}$}, then Condition $(-1, n)$ of $\zeta_{-1, n}$ states $c + d - 3n = d(mn + 1) + 1$ which together with $c=2n$ implies $d =-\frac{n + 1}{mn}$. If \textcolor{red!75!black}{$\{k_0, l_0\} = \{-2, n\}$}, then Condition $(-2, n)$ of $\zeta_{-2, n}$ states $d - n = d(mn + 1) + 1$ which together with $c=2n$ implies $d = -\frac{n + 1}{mn}$. In both cases, we have $P = P_2$.

    Five cases for $\{k_0,l_0\}$ remain, all of which we will lead to a contradiction:
    \begin{description}[leftmargin=5mm]
    \item[\textcolor{red!75!black}{$\mathbf{(k_0,l_0)=(-2,n-1)}$ or $\mathbf{(k_0,l_0)=(-1,n-1)}$}] Either by Condition $(-2,n-1)$ of $\zeta_{-2,n-1}$ or by combining Condition $(-1,n-1)$ of $\zeta_{-1,n-1}$ with Condition $(-2,-1)$ of $\theta_{-2,-1}$, we get $d-n = d(m(n-1)+1)$, which implies $-dm=\frac{n}{n-1}$. Independently, Condition $(n-1,n)$ of $\zeta_{k_0,l_0}$ states $d(m(n-1)+1)\leq d(mn+1)+1$, which implies $-dm\leq 1$, contradicting $-dm=\frac{n}{n-1}$.
    \item[\textcolor{red!75!black}{$\mathbf{(k_0,l_0)=(0,n-1)}$}] Condition $(0,n-1)$ of $\zeta_{0,n-1}$ states $0=d(m(n-1)+1)$ which implies $d=0$.  This contradicts Condition $(0,-2)$ of $\zeta_{0,n-1}$ which is $0\leq d-n$.
    \item[\textcolor{red!75!black}{$\mathbf{(k_0,l_0)=(0,n)}$}] Condition $(0,n)$ of $\zeta_{0,n}$ states $0=d(mn+1)+1$, which implies $d<0$.  This contradicts Condition $(0,-2)$ of $\zeta_{0,n}$, which states $0\leq d-n$.
    \item[\textcolor{red!75!black}{$\mathbf{(k_0,l_0)=(n-1,n)}$}] Condition $(n-1,n)$ of $\zeta_{n-1,n}$ states $d(m(n-1)+1)=d(mn+1)+1$, which implies $dm=-1$. This contradicts Condition $(n-1,-2)$ of $\zeta_{n-1,n}$, which states $d(m(n-1)+1)\leq d-n$, implying $dm(n-1)\leq -n$.\end{description}
  \end{description}
\end{proof}

\begin{proof}[Proof of \cref{thm:mainSystem}]
  We compute the generic root count of System~\eqref{eq:theSystem} by computing the tropical intersection number
  \[ \underbrace{\Trop(\langle f_1,f_2\rangle)}_{=\Sigma_\lin} \cdot \underbrace{\Trop(\langle g,p_i,q_i\rangle)}_{=\Sigma_\nlin}, \]
  where $f_1$, $f_2$, $g$, $p_i$, and $q_i$ are from the modified system in \cref{def:modification}.
  \cref{lem:stableIntersectionSigma2} showed that for a suitably chosen pertubation $u\in\RR^{2n+3}$, the only intersecting cells of $\Sigma_{\lin}$ and $\Sigma_{\nlin}+u$ are:
  \[ \tau_{-1,n;-1,n}\cap (\sigma_2+u),\quad \tau_{-2,-1;-1,n}\cap (\sigma_2+u),\quad \tau_{-2,-1;-1,0}\cap (\sigma_2+u). \]
  Moreover, the intersection happens in the relative interior of the cells above, which, by \cref{lem:SigmaNlin} and \cref{lem:SigmaLin}, all have multiplicity $1$.  Hence, it remains to compute the sublattice indices
  \[ [N : N_{\tau_{-1,n;-1,n}}+N_{\sigma_2}],\quad [N : N_{\tau_{-2,-1;-1,n}}+N_{\sigma_2}],\quad [N : N_{\tau_{-2,-1;-1,0}}+N_{\sigma_2}], \]
  where $N\coloneqq \ZZ^{2n+3}$ and $N_\delta\coloneqq N\cap\spanoperator(\delta)$.

  Recall that
  \begin{align*}
    & \spanoperator(\sigma_2) = \spanoperator\Big(e_{x_1}\!+\!\sum_{i=1}^n e_{y_i},e_{x_1}\!+\!e_{x_2}\!+\!m\cdot e_w\!+\!\sum_{i=1}^n(mi\!+\!1)\cdot (e_{y_i}\!+\!e_{z_i})\Big),\\
    &\spanoperator(\tau_{-1,n;-1,n}) = \spanoperator(e_{x_1}\!+\!e_{y_n}\!+\!e_{z_n},e_{x_2},e_w,e_{y_1},\dots,e_{y_{n-1}},e_{z_1},\dots,e_{z_{n-1}}),\\
    &\spanoperator(\tau_{-2,-1;-1,n}) = \spanoperator(e_{x_1}\!+\!e_{x_2}\!+\!e_{z_n},e_w,e_{y_1},\dots,e_{y_n},e_{z_1},\dots,e_{z_{n-1}}),\\
    &\spanoperator(\tau_{-2,-1;-1,0}) = \spanoperator(e_w,e_{y_1},\dots,e_{y_n},e_{z_1},\dots,e_{z_n}).
  \end{align*}

  For the first index, note that
  \begin{align*}
    &N_{\tau_{-1,n;-1,n}}\!+\!N_{\sigma_2}\\
    &\hspace{3mm}=\ZZ\cdot (e_{x_1}+e_{y_n}+e_{z_n})+\ZZ\cdot e_{x_2}+\ZZ\cdot e_w+\sum_{i=1}^{n-1} \ZZ\cdot e_{y_i}+\sum_{i=1}^{n-1} \ZZ\cdot e_{z_i}\\
    &\hspace{8mm}+\ZZ\cdot (e_{x_1}+e_{y_n}) + \ZZ\cdot (e_{x_1}+(mi+1)\cdot e_{y_n}+(mi+1)\cdot e_{z_n}))\\
    &\hspace{3mm}=\ZZ\cdot e_{x_2} + \ZZ\cdot e_w + \sum_{i=1}^{n-1}\ZZ\cdot e_{y_i} + \sum_{i=1}^n\ZZ\cdot e_{z_i}\\
    &\hspace{8mm}+\ZZ\cdot(e_{x_1}+e_{y_n})+\ZZ\cdot(mn\cdot e_{y_n}).
  \end{align*}
  Therefore, $N/(N_{\tau_{-1,n;-1,n}}+N_{\sigma_2})\cong\ZZ^2/\spanoperator_{\ZZ}((1,1),(0,mn))\cong\ZZ/mn\ZZ$ and hence $[N : N_{\tau_{-1,n;-1,n}}+N_{\sigma_2}]=mn$.

  For the second index, note that
  \begin{align*}
    N_{\tau_{-2,-1;-1,n}}\!+\!N_{\sigma_2}
    &=\ZZ\cdot (e_{x_1}+e_{x_2}+e_{z_n})+\ZZ\cdot e_w+\sum_{i=1}^{n} \ZZ\cdot e_{y_i}+\sum_{i=1}^{n-1} \ZZ\cdot e_{z_i}\\
    &\hspace{5mm}+\ZZ\cdot e_{x_1} + \ZZ\cdot (e_{x_1}+(mi+1)\cdot e_{z_n})\\
    &=\ZZ\cdot e_{x_1} + \ZZ\cdot e_w + \sum_{i=1}^n\ZZ\cdot e_{y_i} + \sum_{i=1}^{n-1}\ZZ\cdot e_{z_i}\\
    &\hspace{5mm}+\ZZ\cdot(e_{x_2}+e_{z_n})+\ZZ\cdot(mn\cdot e_{z_n}).
  \end{align*}
  Therefore, $N/(N_{\tau_{-2,1;-1,n}}+N_{\sigma_2})\cong\ZZ^2/\spanoperator_{\ZZ}((1,1),(0,mn))\cong\ZZ/mn\ZZ$ and hence $[N : N_{\tau_{-2,1;-1,n}}+N_{\sigma_2}]=mn$.

  For the third index, note that
  \[ N_{\tau_{-2,-1;-1,0}}\!+\!N_{\sigma_2} =\ZZ\cdot e_w+\sum_{i=1}^n \ZZ\cdot e_{y_i}+\sum_{i=1}^n \ZZ\cdot e_{z_i}+\ZZ\cdot e_{x_1} + \ZZ\cdot (e_{x_1}+e_{x_2}) = N \]
  Therefore, $[N : N_{\tau_{-1,n;-1,n}}+N_{\sigma_2}]=1$.

  Summing all three indices, we obtain a generic root count of $2mn+1$.
\end{proof}

\printbibliography[keyword=specialized-tropical-geometry]
\end{document}